\documentclass[sigconf]{acmart}

\usepackage{mathtools}
\usepackage{amsfonts}
\usepackage{physics}

\usepackage[T1]{fontenc}
\usepackage[utf8]{inputenc}
\usepackage[english]{babel}
\usepackage{microtype}
\usepackage[autostyle]{csquotes}


\usepackage{enumitem}
\usepackage{hyperref}
\usepackage{tabularx}

\usepackage[english, algosection, algoruled, noline]{algorithm2e}

\usepackage{cleveref}
\usepackage{xcolor}

\newcommand{\zerodisplayskips}{%
  \setlength{\abovedisplayskip}{0.5pt}%
  \setlength{\belowdisplayskip}{0.5pt}%
  \setlength{\abovedisplayshortskip}{0.5pt}%
  \setlength{\belowdisplayshortskip}{0.5pt}}
\appto{\normalsize}{\zerodisplayskips}
\appto{\small}{\zerodisplayskips}
\appto{\footnotesize}{\zerodisplayskips}

\textwidth=18truecm
\textheight=23truecm
\oddsidemargin=-0.75truecm
\evensidemargin=-0.75truecm
\topmargin=-0.95truecm






\newcommand{\N}{\mathbb N}

\renewcommand{\C}{\mathbb C}

\newcommand{\R}{\mathbb R}

\newcommand{\K}{\mathbb K}

\renewcommand{\epsilon}{\varepsilon}
\newcommand{\p}{\mathfrak p}

\newcommand{\m}{\mathfrak m}

\newcommand{\cI}{\mathcal{I}}

\newcommand{\cL}{\mathcal{L}}

\newcommand{\cV}{\mathcal{V}}

\newcommand*{\tigen}[2]{\langle {{#1}} \rangle_{{#2}}}

\DeclareMathOperator{\sign}{sign}

\DeclareMathOperator{\closure}{cl}

\DeclareMathOperator{\jac}{Jac}

\def \mom {\mathrm{MoM}}

\def \eval {\mathbf{e}}

\setlist[enumerate,1]{label={(\roman*)},ref={\thetheorem (\roman*)}}

\def\ann{\mathrm{Ann}}
\def\CC{\mathbb{C}}
\def\NN{\mathbb{N}}
\def\RR{\mathbb{R}}

\def\RRg{\RR[\vb x]}
\def\CRg{\CC[\vb x]}

\newcommand{\val}[2]{\langle {{#1},{#2}} \rangle}
\newcommand{\vspan}[1]{\langle {#1} \rangle}

\def\proj{\textup{proj}}
\newcommand{\assign}{:=}
\newcommand{\cdummy}{\cdot}

\newcommand{\strong}[1]{\textbf{#1}}

\newcommand{\conj}[1]{\overline{#1}}
\newcommand{\cl}[1]{\closure \left( {#1} \right)}

\newtheorem{theorem}{Theorem}[section]
\newtheorem{remark}[theorem]{Remark}

\clubpenalty=10000
\widowpenalty = 10000

\copyrightyear{2021}
\acmYear{2021}
\setcopyright{acmlicensed}\acmConference[ISSAC '21]{Proceedings of the 2021 International Symposium on Symbolic and Algebraic Computation}{July      18--23, 2021}{Virtual Event, Russian Federation}
\acmBooktitle{Proceedings of the 2021 International Symposium on Symbolic and Algebraic Computation (ISSAC '21), July      18--23, 2021, Virtual Event, Russian Federation}
\acmPrice{15.00}
\acmDOI{10.1145/3452143.3465541}
\acmISBN{978-1-4503-8382-0/21/07}

\settopmatter{printacmref=true}
\keywords{real radical, moments, positive polynomial, convex
  optimization, orthogonal polynomials, numerical algorithm}

\begin{document}
\fancyhead{}
\title{Computing Real Radicals by Moment Optimization}

\author{Lorenzo Baldi \& Bernard Mourrain}
\affiliation{
Inria M\'editerran\'ee, Université C\^ote d'Azur,\\
Sophia
Antipolis,
France \\
}

\begin{abstract}
We present a new algorithm for computing the real radical of an
ideal $I$ and, more generally, the $S$-radical of $I$, which is
based on convex moment optimization. A truncated positive generic linear
functional $\sigma$ vanishing on the generators of $I$ is computed
solving a Moment Optimization Problem (MOP). We show that, for a
large enough degree of truncation, the annihilator of $\sigma$
generates the real radical of $I$. We give an effective, general
stopping criterion on the degree to detect when the prime ideals lying
over the annihilator are real and compute the real radical
as the intersection of real prime ideals lying over $I$.

The method involves several ingredients, that exploit the properties
of generic positive moment sequences. A new efficient algorithm
is proposed to compute a graded basis of the annihilator of a
truncated positive linear functional. We propose a new algorithm to
check that an irreducible decomposition of an algebraic variety is real,
using a generic real projection to reduce to the hypersurface
case. There we apply the Sign Changing Criterion, effectively
performed with an exact MOP.
Finally we illustrate our approach in some examples.
\end{abstract}

\begin{CCSXML}
<ccs2012>
   <concept>
       <concept_id>10003752.10003809.10003716.10011138.10010042</concept_id>
       <concept_desc>Theory of computation~Semidefinite programming</concept_desc>
       <concept_significance>500</concept_significance>
       </concept>
   <concept>
       <concept_id>10002950.10003714.10003715.10003720.10003747</concept_id>
       <concept_desc>Mathematics of computing~Grobner bases and other special bases</concept_desc>
       <concept_significance>500</concept_significance>
       </concept>
   <concept>
       <concept_id>10010147.10010148.10010149.10010154</concept_id>
       <concept_desc>Computing methodologies~Hybrid symbolic-numeric methods</concept_desc>
       <concept_significance>500</concept_significance>
       </concept>
 </ccs2012>
\end{CCSXML}

\ccsdesc[500]{Theory of computation~Semidefinite programming}
\ccsdesc[500]{Mathematics of computing~Grobner bases and other special bases}
\ccsdesc[500]{Computing methodologies~Hybrid symbolic-numeric methods}

\maketitle

\section{Introduction}

In many ``real world'' problems which can be modeled by polynomial
constraints, the solutions with real coordinates are generally
analyzed with particular attention. Efficient algebraic methods have
been developed over the years to solve such systems of polynomial constraints,
including Grobner basis, border basis, resultants, triangular sets, homotopy
continuation. But  all these methods involve implicitly the complex roots of the
polynomial systems and their complexity depends on the degree (and
multiplicity) of the underlying complex algebraic varieties.

Finding equations vanishing on the real solutions without computing
all the complex roots is a challenging question. This means computing
the vanishing ideal of the real solutions of an ideal $I$, that is,
its real radical $\sqrt[\R]{I}$.

Several approaches have been proposed to compute the real
radical.
Some of these methods
are reducing to univariate problems
\cite{BeckerComputationRealRadicals1993,
  NeuhausComputationrealradicals1998,
  BeckerRealNullstellensatz1999,
  Spangzerodimensionalapproachcompute2008},
or exploiting quantifier elimination techniques
\cite{GalligoComplexityFindingIrreducible1995},
or using infinitesimals
\cite{Roycomplexificationdegreesemialgebraic2002}
or triangular sets and regular chains
\cite{XiaAlgorithmIsolatingReal2002,
  ChenTriangulardecompositionsemialgebraic2013}.

Sums-of-Squares convex optimisation and moment matrices are used in
\cite{lasserre_semidefinite_2008,lasserre_moment_2013}
to compute real radicals, when the set of real solutions is finite.
Some properties of ideals associated to semidefinite programming
relaxations are analysed in \cite{SekiguchiRealidealduality2013},
involving the simple point criterion.
In \cite{Macertificatesemidefiniterelaxations2016b}
a stopping criterion is presented to verify that a Pommaret basis has been
computed from the kernels of moment
matrices involved in Sum of Squares relaxation.
In \cite{BrakeValidatingCompletenessReal2016}, a test based on
sum-of-square decomposition is proposed to verify that polynomials
vanishing on a subset of the semi-algebraic set are in the real radical.

In \cite{SafeyElDinComputingrealradicals2021}, an algorithm
based on rational representations of equidimensional components of
algebraic varieties and singular locus recursion is presented and its
complexity is analysed.

We present a new algorithm for computing the real radical of an
ideal $I$ and, more generally, the $S$-radical of $I$,
which is based on convex moment optimization.
An interesting feature of the approach is that it does not involve the
complex solutions, which are not on a real component
of the algebraic variety $\cV(I)$. \Cref{sec:radicals}
recalls the relationship between vanishing ideals and radicals for
real and complex algebraic varieties.

Generators of the real radical of $I$ are computed from a truncated
generic positive linear functional $\sigma$ vanishing on the
generators of $I$. This truncated linear functional
is computed by solving a Moment Optimization Problem (MOP), as
summarized in \Cref{sec:mop}.

We show that, for a large enough degree of truncation, the annihilator
of $\sigma$ generates the real radical of $I$, suggesting an algorithm
which will compute the annihilator of a generic positive linear
functional for increasing degrees.
Our approach differs from the works in
\cite{lasserre_semidefinite_2008,lasserre_moment_2013}, which apply
for zero-dimensional real ideals using the flat extension property
(see
e.g. \cite{CurtoFlatExtensionsPositive1998,Laurentgeneralizedflatextension2009})
as a stopping criterion: if the flat
extension property holds then the annihilator of $\sigma$ generates
the real radical of $I$, and this criterion is satisfied for a degree big enough.
But the question remained open for positive-dimensional real varieties
(see e.g. \cite[\S~4.3]{LaurentApproachMomentsPolynomial2012}). In
this work, we handle more specifically the positive-dimensional
case. This case has been analysed in
\cite{Macertificatesemidefiniterelaxations2016b}, where a stopping
criterion is proposed to detect when a Pommaret basis has been computed. This test
is generically satisfied for a large enough degree of truncation, but
it does not certify that the basis generates the real radical.

In this work, we give a new effective
stopping criterion to detect when the prime ideals associated to the annihilator
are real and compute equations for the minimal real prime ideals lying over $I$. This criterion is always satisfied for a large enough degree of truncation, and it certifies that the annihilator generates the real radical if the generated ideal has no embedded components.

The method involves several ingredients, that exploit the properties
of generic non-negative moment sequences.

A new efficient algorithm is proposed in \Cref{sec:orthogonal}
to compute a graded basis of the annihilator of a truncated
non-negative linear functional. A new algorithm is presented in \Cref{sec:isreal}
to check that an irreducible decomposition of an algebraic variety is real,
using a generic real projection to reduce to the hypersurface
case and the Sign Changing Criterion, effectively
performed with an exact MOP.

The complete algorithm for computing the real radical of an ideal $I$
as the intersection of real prime ideals is presented in \Cref{sec:computing}.

In \Cref{sec:example}, we illustrate the algorithm by some effective numerical
computation on examples, where the real radical differs
significantly from the ideal $I$.

\section{Varieties and radicals}\label{sec:radicals}

Let $f_{1}, \ldots, f_{s}\in \CC[x_{1}, \ldots, x_{n}]=\CRg$ and let $I=(\vb f) \subset \CRg$
be the ideal generated by $\vb f=\{f_{1}, \ldots, f_{s}\}$. The algebraic
variety defined by $\vb f$ is denoted $V=\cV_{\CC}(I)=\{\xi \in \CC^{n}\mid
f_{i}(\xi)=0, i=1,\ldots,s\}$.
It decomposes into an union of irreducible components
$V=\cup_{i=1}^{l} V_{i}$ where $V_{i}=\cV_{\CC}(\p_{i})$ with $\p_{i}$ a
prime ideal of $\CRg$. An irreducible variety $V$ is an algebraic variety
which cannot be decomposed into an union of algebraic varieties
distinct from $V$.

The Hilbert Nullstellensatz states that the vanishing ideal
$\cI(V)=\{p \in \CRg\mid \forall \xi \in V, p(\xi)=0\}$ of an
algebraic variety $V\subset \C^{n}$ is the radical
$$
\sqrt{I}=\{p \in \CRg\mid \exists m \in \NN,\, p^{m}\in
I\}
$$
(see e.g. \cite{cox_ideals_2015}). This implies that $\sqrt{I}=\cap_{i=1}^{l}
\p_{i}$. We say that $I$ is radical if $I=\sqrt{I}$.

Considering now equations $\vb f=\{f_{1}, \ldots, f_{s}\} \subset \RRg$ with real
coefficients, the real variety defined by $\vb f$ is $V_{\RR} =
\cV_{\CC}(I) \cap \RR^{n} =
\cV_{\RR}(I)=\{\xi \in \RR^{n}\mid f_{i}(\xi)=0, i=1,\ldots,s\}$.
The vanishing ideal of $V_{\RR}$ is $\cI(V_{\RR})=\{p \in \RRg\mid
\forall \xi \in V_{\RR},\, p(\xi)=0\}$.
Let $\Sigma^{2}=\{ \sum_{j} p_{j}^{2},\, p_{j}\in \RRg\}$ be the sums of
squares of polynomials of $\RRg$.
The real Nullstellensatz states that $\cI(\cV_{\RR}(I))$ is the real radical of
$I$, defined as:
$$
\sqrt[\RR]{I}=\{p \in \RRg\mid \exists m \in \NN, s \in \Sigma^{2} \textup{ s.t. } p^{2m}+ s \in I\}
$$
(see e.g. \cite[p.~26]{marshall_positive_2008},
\cite[p.~85]{bochnak_real_1998}). If $I =\sqrt[\R]{I}$ then we say that $I$ is a \emph{real} or \emph{real radical} ideal.
The real radical of $I$ contains $\sqrt{I}$ and is the intersection of real
prime ideals $\p_{i}$ in $\RRg$ containing $I$, corresponding to the real irreducible
components of $\cV_{\R}(I)$. The example $f=
x_{1}^{2}+x_{2}^{2}$ such that $I=(f)=\sqrt{I}$ and
$\sqrt[\RR]{I}=(x_{1},x_{2})$ shows that the radical and real radical
ideals can define algebraic varieties of different dimensions.

Sets $S=\{\xi \in \RR^{n}\mid f_{1}(\xi)=0,\ldots, f_{s}(\xi)=0,
g_{1}(\xi)\ge 0, \ldots,$ $g_{r}(\xi)\ge 0 \}$ with $f_{i},g_{j}\in
\RRg$ are called basic semi-algebraic sets.
The real Nullstellensatz for $S$ states that the vanishing ideal $\cI(S)$ is
the $S$-radical of $I=(\vb f)$:
$$
\sqrt[S]{I}=\{p \in \RRg\mid \exists m \in \NN, (s_{\alpha}) \in
 (\Sigma^{2})^{\{0,1\}^{r}}\textup{ s.t. } p^{2m}+
\sum_{\alpha} s_{\alpha} \vb g^{\alpha} \in I\}
$$
(see e.g. \cite[th. 2.2.1]{marshall_positive_2008},
\cite[cor. 4.4.3]{bochnak_real_1998}, \cite{krivine:hal-00165658}, \cite{Stenglenullstellensatzpositivstellensatzsemialgebraic1974}).
The $S$-radical $\sqrt[S]{I}$ is related to the real radical of
an extended ideal $I_{S}$ defined by introducing slack variables
$s_{1}, \ldots, s_{r}$ for each non-negativity constraint defining
$S$: ${I_{S}}= (f_{1}, \ldots, f_{s}, g_{1}-s_{1}^{2}, \ldots,
g_{r}-s_{r}^{2}) \subset \RR[x_{1},\ldots, x_{n},$ $s_{1},$ $\ldots, s_{r}].$
Namely, we have $\sqrt[S]{I}= \sqrt[\RR]{I_{S}}\cap \RRg$ (by the Real Nullstellensatz, see e.g. \cite[p.~91]{bochnak_real_1998}).
Therefore, in the following we will focus on the computation of the real
radical of $I=(\vb f)$ and apply this transformation for the computation of $S$-radicals.

To describe the irreducible components of a variety
$\cV_{\CC}(f_{1},\ldots, f_{s})$ defined by equations $f_{1}, \ldots,
f_{s}\in \RRg$, we use tools from Numerical Algebraic Geometry, namely
a description of irreducible components by witness sets.
A witness set of an irreducible algebraic variety $V\subset \CC^{n}$ is a triple
$W = (\vb f, L, S)$ where $\vb f\subset \cI(V)$, $L$ is a generic linear
space of dimension $n-\dim(V)$ given by $\dim(V)$ linear equations and
$S=L\cap V \subset \CC^{n}$ is a finite set of $\deg(V)$ points.
Given equations $\vb f=\{f_{1},\ldots, f_{s}\}\subset \RRg$, a numerical
irreducible decomposition of $\cV_{\CC}(\vb f)$ can be computed as a
collection of witness sets $W_{i}=(\vb h_{i}, L_{i}, S_{i})$ such that
each irreducible component $V_{i}$ of $V$ is described by one and only
one witness set $W_{i}$ and all sample sets $S_{i}$ are pairewise disjoint.
Several methods, based on homotopy techniques, have been developed
over the past to compute such decomposition. See
e.g.
\cite{SommeseNumericalDecompositionSolution2001,
HauensteinRegenerativecascadehomotopies2011,
BatesNumericallySolvingPolynomial2013}.

The witness set $W$ of an (irreducible) algebraic variety $V$,
can be used to compute defining equations $\vb h=\{ h_{1},\ldots,h_{n}\}\subset \CRg$
such that $\cV_{\C}(\vb h)= V$. Homotopy techniques are employed to
generate enough sample points on $V$. The equations
$h_{i}$ are then computed by projection of the sample points onto $\le n+1$ generic
linear spaces of dimension $(\dim(V)+1)$ and by interpolation. See
e.g. \cite{SommeseNumericalDecompositionSolution2001}, for more details.

The numerical irreducible decomposition of $\cV_{\C}(\vb f)$ as a collection of witness
sets provides a description of all the irreducible components $V_{i}$
associated to the isolated primary components $Q_{i}$ of $I=(\vb f)$
\cite{atiyah_introduction_1994}.
To check that these primary components are reduced and thus prime
(i.e. $\sqrt{Q_{i}}=Q_{i}$), it is
enough to check that the Jacobian of $\bf f$
is of rank $n-\dim V_{i}$ (Jacobian criterion) at one of the sample points of the
witness set $W_{i}$, describing the irreducible component $V_{i}=\cV(P_{i})$.

Checking that $I=(\vb f)$ has no embedded component can also be done
by numerical irreducible decomposition of deflated ideals, as
described in \cite{KroneNumericalalgorithmsdetecting2017}. We are not
going to use this deflation technique to check non-embedded
components.

\vspace{-3mm}
\section{Moment relaxations}\label{sec:mop}

\subsection{Linear functionals}

We describe the dual of polynomial rings (see for instance
\cite{mourrain_polynomialexponential_2018} for more details). For
$\sigma \in (\RRg)^* =\hom_{\R}(\RRg,\R)= \{ \, \sigma \colon \RRg \to \R \mid \sigma \text{ is $\R$-linear} \, \}$, we denote $\val{\sigma}{f} = \sigma (f)$ the application of $\sigma$ to $f \in \RRg$. Recall that $(\RRg)^*\cong \R[[\vb{y}]] \coloneqq \R[[y_1,\dots,y_n]]$, with the isomorphism given by:
$\sigma \mapsto \sum_{\alpha \in \N^n} \val{\sigma}{\vb x^{\alpha}} \frac{\vb y^{\alpha}}{\alpha !}$,
where $\{\frac{\vb y^{\alpha}}{\alpha !}\}$ is the dual basis to $\{\vb x^{\alpha} \}$, i.e. $\val{\vb y^{\alpha}}{\vb x^{\beta}}=\alpha !\, \delta_{\alpha,\beta}$. With this basis we can also identify $\sigma \in (\RRg)^*$ with its sequence of coefficients $(\sigma_{\alpha})_{\alpha}$, where $\sigma_{\alpha}\coloneqq \val{\sigma}{\vb x^{\alpha}}$.

If $\sigma \in (\RRg)^*$ and $g \in \RRg$, we define the
\emph{convolution of $g$ and $\sigma$} as $g \star \sigma \coloneqq
\sigma \circ m_g \in (\RRg)^*$ where $m_{g}$ is the operator of multiplication
 by $g$ on the polynomials (i.e. $\val{g \star \sigma}{f} =
 \val{\sigma}{gf} \ \forall f$).
The operation $\star$ defines an $\RRg$-module structure on $\R[[\vb y]]$.
We define the \emph{Hankel operator} $H_{\sigma} \colon \RRg \to
(\RRg)^*, \ g \mapsto g \star \sigma$ and the {\em  annihilator}
$\ann(\sigma)= \ker H_{\sigma}$:  $g \in \ann(\sigma) \iff
H_{\sigma}(g)=0 \iff g \star \sigma = 0$.

We describe these operations in coordinates. If $\sigma = (\sigma_{\alpha})_{\alpha}$ and $g=\sum_{\alpha}g_{\alpha}\vb x^{\alpha}$ then $g \star \sigma = (\sum_{\beta}g_{\beta}\sigma_{\alpha+\beta})_{\alpha}$;
 the matrix $H_{\sigma}$ in the basis $\{\vb x^{\alpha} \}$ and  $\{\frac{\vb y^{\alpha}}{\alpha !}\}$ is $H_{\sigma}=(\sigma_{\alpha+\beta})_{\alpha,\beta}$.

\subsection{Truncation}

We introduce the same operations in a finite dimensional setting, considering only polynomials of bounded degree. If $A \subset \RRg$, $A_d\coloneqq \{ \,f \in A \mid \deg f \le d \, \}$. In particular $\RRg_d$ is the vector space of polynomials of degree $\le d$.
If $\sigma \in (\RRg)^*$ (resp. $\sigma \in (\RRg_r)^*, r\ge t$) then $\sigma^{[t]} \in (\RRg_t)^*$ denotes
its restriction to $\RRg_t$; moreover if $B\subset (\RRg)^*$ (resp. $B \subset (\RRg_r)^*, \ r\ge t$) then $B^{[t]} \coloneqq \{ \, \sigma^{[t]} \in (\RRg_t)^* \mid \sigma \in B \, \}$ .

If $\sigma \in (\RRg_t)^*$ and $g \in \RRg_t$, then $g \star \sigma \coloneqq \sigma \circ m_g \in (\RRg_{t-\deg g})^*$. If $\sigma \in (\RRg)^*$ (or $\sigma \in (\RRg_{r})^*$, $r\ge 2t$),
then we define $H_{\sigma}^t \colon \RRg_t \to (\RRg_t)^*, \ g \mapsto (g \star \sigma)^{[t]}$. We have $(g \star \sigma)^{[2t]} = 0 \iff H_{g \star \sigma}^{t}=0$: in analogy to the infinite dimensional setting we define $\ann_d(\sigma) \coloneqq \ker H_\sigma^d$.

For $\vb h = h_1,\dots,h_r \subset \RRg$ we define $\langle \vb h \rangle_t \coloneqq \big\{ \, \sum_{i=1}^r f_i h_i \in \R[\vb{X}]_t \mid f_i \in \R[\vb{X}]_{t-\deg h_i} \, \big\}$, the elements of $(\vb h)_t$ generated in degree $\le t$.

\subsection{Positive linear functionals and generic elements}

Let $A \subset \RRg$ (resp. $A \subset \RRg_t$). We define $A^{\perp} \coloneqq \big\{\, \sigma \in  (\RRg)^* \mid \val{\sigma}{f}=0 \ \forall f \in A \, \big\}$ (resp. $A^{\perp} \coloneqq \big\{\, \sigma \in  (\RRg_t)^* \mid \val{\sigma}{f}=0 \ \forall f \in A \, \big\}$).
Notice that $\sigma \in \tigen{\vb h}{t}^{\perp}$ (resp. $(\vb h)^{\perp}$) if and only if $(h \star \sigma)^{[t-\deg h]} = 0 \ \forall h \in \vb h$ (resp. $h \star \sigma = 0 \ \forall h \in \vb h$).

We say that $\sigma\in (\RRg_{2t})^*$ is \emph{positive semidefinite} (psd) $\iff H_{\sigma}^t$ is psd, i.e. $\val{H_{\sigma}^t(f)}{f}=\val{\sigma}{f^2}\ge 0 \ \forall f\in \RRg_t$. If $\sigma$ is pds then $\val{\sigma}{f^2}=0 \Rightarrow f \in \ann_t(\sigma)$ \cite[3.12]{lasserre_moment_2013}

For $G \subset \RRg_t$ we finally define the closed convex cone:
\[
    \cL_t(G) \coloneqq \{ \, \sigma \in (\RRg_t)^* \mid \sigma \text{ is psd and } \forall g \in (G\cdot\Sigma^{2})_{t} \  \val{\sigma}{g}\ge 0 \, \},
\]
see \cite{baldi:hal-03082531} for more details. In particular $\cL_t(\pm \vb h) = \{ \, \sigma \in \tigen{\vb h}{t}^{\perp} \mid \sigma \text{ is psd} \, \}$. We use $\cL(G)$ for the infinite dimensional case. Notice that $\cL(G)^{[t]}\subset \cL_t(G)\ \forall t$.

Linear functionals of special importance are evaluations $\eval_{\xi}$ defined as $\val{\eval_{\xi}}{f} = f(\xi)$. For $\xi \in \cV_{\R}(\vb h)$ we have $\eval_{\xi} \in \cL(\pm \vb h)$.

\begin{definition}
\label{def::generic_t}
    We say that $\sigma^* \in \cL_t(\pm \vb h)$ is \emph{generic} if $\rank H_{\sigma^*}^t=\max \{ \rank H_{\eta}^t \mid \eta \in \cL_t(\pm \vb h) \}$.
\end{definition}
Genericity is characterized as follows, see e.g. \cite[prop. 4.7]{lasserre_moment_2013}:
\begin{proposition}
    \label{prop::genericity}
    Let $\sigma \in \cL_{2t}(\pm \vb h)$. The following are equivalent:
    \begin{enumerate}
        \item $\sigma$ is generic;
        \item $\ann_t(\sigma) \subset \ann_t(\eta) \ \forall \eta \in \cL_{2t}(\vb g)$;
        \item $\forall d\le t$, we have: $\rank H_{\sigma}^d=\max \{ \rank H_{\eta}^d \mid \eta \in \cL_{2t}(\pm \vb h) \}$.
    \end{enumerate}
\end{proposition}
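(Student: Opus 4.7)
The plan is to prove the equivalence via the cycle $(i) \Rightarrow (ii) \Rightarrow (iii) \Rightarrow (i)$, with the last implication being immediate by specializing $d=t$. Both non-trivial steps will rest on two pillars: convexity of the cone $\cL_{2t}(\pm \vb h)$, and the elementary fact that for positive semidefinite matrices $A, B$ one has $\ker(A+B) = \ker A \cap \ker B$.

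For $(i) \Rightarrow (ii)$, I would pick any $\eta \in \cL_{2t}(\pm \vb h)$ and form the midpoint $\tau \coloneqq \tfrac{1}{2}(\sigma+\eta)$, which again lies in the cone. From the psd identity, $\ker H_\tau^t = \ker H_\sigma^t \cap \ker H_\eta^t \subset \ker H_\sigma^t$, so $\rank H_\tau^t \geq \rank H_\sigma^t$. Genericity of $\sigma$ forces the reverse inequality, and equality of kernels then unpacks to $\ker H_\sigma^t \subset \ker H_\eta^t$, i.e., $\ann_t(\sigma) \subset \ann_t(\eta)$.

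For $(ii) \Rightarrow (iii)$, the crux is a lemma I would isolate beforehand, using positivity to transfer annihilators across truncation degrees: for any psd $\sigma \in (\RRg_{2t})^*$ and any $d \leq t$,
\[
\ann_d(\sigma) = \ann_t(\sigma) \cap \RRg_d.
\]
The inclusion $\supset$ is immediate from the definitions. For the converse, given $f \in \ann_d(\sigma) \subset \RRg_d$, testing with $g = f$ gives $\val{H_\sigma^t(f)}{f} = \val{\sigma}{f^2} = 0$, and since $H_\sigma^t$ is psd, this forces $H_\sigma^t(f) = 0$, so $f \in \ann_t(\sigma)$. Intersecting the containment from (ii) with $\RRg_d$ then yields $\ann_d(\sigma) \subset \ann_d(\eta)$ for every $\eta \in \cL_{2t}(\pm \vb h)$, hence $\rank H_\sigma^d \geq \rank H_\eta^d$, which is (iii).

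The only subtle point will be the lemma relating $\ann_d$ and $\ann_t$: it genuinely fails for linear functionals that are not psd, and without it one cannot pass from genericity at the top degree $t$ to the uniform rank-maximality at all lower degrees required by (iii). Once that lemma is identified, both non-trivial implications collapse into the convexity of $\cL_{2t}(\pm \vb h)$ combined with the kernel identity for sums of psd matrices.
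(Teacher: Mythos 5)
The paper does not prove this proposition; it cites \cite[prop. 4.7]{lasserre_moment_2013} and moves on, so there is no in-text argument to compare against. Your proof is correct and is essentially the standard derivation one finds behind such citations. The cycle $(iii)\Rightarrow(i)$ is indeed just $d=t$; $(i)\Rightarrow(ii)$ correctly exploits convexity of $\cL_{2t}(\pm\vb h)$ together with the fact that $H^t_{(\sigma+\eta)/2}=\tfrac12(H^t_\sigma+H^t_\eta)$ and the psd kernel identity $\ker(A+B)=\ker A\cap\ker B$, so that maximality of $\rank H^t_\sigma$ forces $\ker H^t_\sigma\subset\ker H^t_\eta$. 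The lemma you isolate for $(ii)\Rightarrow(iii)$, namely $\ann_d(\sigma)=\ann_t(\sigma)\cap\RRg_d$ for psd $\sigma$ and $d\le t$, is exactly the right pivot: its nontrivial inclusion is the observation the paper itself records in \S 3.3 (``if $\sigma$ is psd then $\val{\sigma}{f^2}=0\Rightarrow f\in\ann_t(\sigma)$''), and applying it to both $\sigma$ and $\eta$ before intersecting the inclusion of $(ii)$ with $\RRg_d$ is clean. Your remark that this lemma genuinely uses positivity is the key conceptual point behind the whole equivalence. Two cosmetic notes: the $\cL_{2t}(\vb g)$ in item $(ii)$ of the statement is evidently a typo for $\cL_{2t}(\pm\vb h)$, which you silently and correctly read as such; and for completeness one should say explicitly, as you implicitly do, that $\sigma$ itself belongs to $\cL_{2t}(\pm\vb h)$ so that ``$\rank H^d_\sigma\ge\rank H^d_\eta$ for all $\eta$'' upgrades to equality with the maximum.
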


Generic elements can be used to compute the real radical of ideals, see \cite[th. 7.39]{rostalski_algebraic_2009}. We give in \Cref{thm:realradical} a proof of this result. See also \cite[th. 3.16]{baldi:hal-03082531} for a generalisation to quadratic modules.

\begin{theorem} \label{thm:realradical}
  Let $\sigma^* \in \cL_{2d}(\pm\vb h)$ be generic and $I=(\vb h)$. Then for every $d \ge \deg \vb h$ we have $I \subset (\ann_d(\sigma^*))\subset \sqrt[\RR]{I}$. Moreover for $d$ big enough $(\ann_d(\sigma^*)) = \sqrt[\RR]{I}$.
\end{theorem}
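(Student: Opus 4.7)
The plan is to establish the two inclusions separately and then handle the equality in large degree. For $I\subset(\ann_d(\sigma^*))$: any $h\in\vb h$ (with $\deg h\le d$) and any $f\in\RRg_d$ give $hf\in\tigen{\vb h}{2d}$, so $\val{\sigma^*}{hf}=0$ since $\sigma^*\in\tigen{\vb h}{2d}^\perp$; hence $h\in\ann_d(\sigma^*)$. For $(\ann_d(\sigma^*))\subset\sqrt[\RR]{I}$: every $\xi\in\cV_\RR(I)$ gives a truncated evaluation $\eval_\xi^{[2d]}$ which is psd and kills $\tigen{\vb h}{2d}$, hence lies in $\cL_{2d}(\pm\vb h)$. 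By \Cref{prop::genericity}(ii) we get $\ann_d(\sigma^*)\subset\ann_d(\eval_\xi^{[2d]})$, and any $g\in\ann_d(\eval_\xi^{[2d]})$ satisfies $g(\xi)f(\xi)=0$ for every $f\in\RRg_d$, forcing $g(\xi)=0$. So every element of $\ann_d(\sigma^*)$ vanishes on $\cV_\RR(I)$, and the real Nullstellensatz places the generated ideal inside $\cI(\cV_\RR(I))=\sqrt[\RR]{I}$.

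For the equality in large degree I combine Noetherianity with the real Nullstellensatz. Fix generators $p_1,\ldots,p_r$ of $\sqrt[\RR]{I}$; for each $i$ pick $m_i\in\N$ and $s_i=\sum_j q_{ij}^2\in\Sigma^2$ with $p_i^{2m_i}+s_i\in I$, together with a witnessing representation of this element in $(\vb h)$. Choose $d$ large enough that every such representation fits inside $\tigen{\vb h}{2d}$, every $\deg q_{ij}\le d$, and $m_i\deg p_i\le d$; only finitely many constraints arise, so a uniform $d$ works. Pairing $\sigma^*$ with the identity $p_i^{2m_i}+s_i\in I$ yields $\val{\sigma^*}{(p_i^{m_i})^2}+\sum_j\val{\sigma^*}{q_{ij}^2}=0$; each summand is non-negative by the psd condition, so each vanishes individually, and the psd property $\val{\sigma}{f^2}=0\Rightarrow f\in\ann_d(\sigma)$ then puts $p_i^{m_i}\in\ann_d(\sigma^*)$.

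The crux is upgrading $p_i^{m_i}\in\ann_d(\sigma^*)$ to $p_i\in\ann_d(\sigma^*)$. I plan a power-descent internal to the annihilator: given $p_i^k\in\ann_d(\sigma^*)$ with $2\le k\le m_i$, the polynomial $p_i^{k-2}$ lies in $\RRg_d$ (because $m_i\deg p_i\le d$), so the definition of $\ann_d$ gives $\val{\sigma^*}{p_i^k\cdot p_i^{k-2}}=\val{\sigma^*}{(p_i^{k-1})^2}=0$, and the psd property returns $p_i^{k-1}\in\ann_d(\sigma^*)$. Iterating this descent from $k=m_i$ down to $k=2$ yields $p_i\in\ann_d(\sigma^*)$, so $\sqrt[\RR]{I}=(p_1,\ldots,p_r)\subset(\ann_d(\sigma^*))$; combined with the second inclusion we obtain equality. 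The main obstacle is exactly this descent: the Positivstellensatz certificate alone only places the power $p_i^{m_i}$ inside $\ann_d(\sigma^*)$, and peeling off powers one at a time relies on having bounded-degree test polynomials available inside the annihilator, which is why the degree threshold $m_i\deg p_i\le d$ is critical.
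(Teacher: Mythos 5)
Your proof is correct and follows essentially the same route as the paper's: the first inclusion is immediate since $\vb h\subset\ann_d(\sigma^*)$, the second uses the evaluations $\eval_\xi$ at real zeros together with \Cref{prop::genericity}, and the equality for $d$ large comes from pairing $\sigma^*$ against Real Nullstellensatz certificates $p_i^{2m_i}+s_i\in I$ and exploiting positivity. The only difference is that you make explicit the power-descent from $p_i^{m_i}\in\ann_d(\sigma^*)$ down to $p_i\in\ann_d(\sigma^*)$, with the degree bookkeeping $m_i\deg p_i\le d$ that it needs, whereas the paper compresses that step into the single clause ``thus $\val{\sigma}{g_i^{2m_i}}=0$ and $g_i\in\ann_d(\sigma)$''; spelling it out is a genuine improvement in rigor but does not change the structure of the argument.
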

\begin{proof}
  The inclusion $I \subset (\ann_d(\sigma^*))$ is clear since $\vb h \subset \ann_d(\sigma^*)$ by definition. Now let $J = \sqrt[\RR]{I}$. Notice that, for $\xi \in \RR^n$, $\ann_d(\eval_{\xi}) = \cI(\xi)_d = (x_1-\xi_1,\dots,x_n-\xi_n)_d$. Moreover, if $\xi \in \cV_{\RR}(I)$, then $\eval_{\xi}^{[2d]}\in \cL_{2d}(\pm \vb h)$. Then, since $\sigma^*$ is generic:
  \[
    \ann_d(\sigma^*) \subset \bigcap_{\xi \in \cV_{\RR}(I)} \ann_d(\eval_{\xi})=\bigcap_{\xi \in \cV_{\RR}(I)} \cI(\xi)_d = J_d,
  \]
  and thus $(\ann_d(\sigma^*))\subset J$.

  For the second part, let $g_1, \dots, g_k$ be generators of
  $J$. By the Real Nullstellensatz, $\forall i$ there exists $m_i \in \N,
  s_i \in \Sigma^2$ such that $g_i^{2^{m_i}}+s_i \in I$. Then for $d$
  big enough and $\sigma \in \cL_{2d}(\pm \vb h)$ we have
  $\val{\sigma^{[2d]}}{g_i^{2^{m_i}}+s_i} = 0$, thus
  $\val{\sigma^{[2d]}}{g_i^{2^{m_i}}} = 0$ and $g_i \in \ann_d(\sigma)$. This implies $J \subset (\ann_d(\sigma))$ for all $\sigma \in \cL_{2d}(\pm \vb h)$, and in particular for $\sigma = \sigma^*$ generic.
\end{proof}

The goal of the paper is to find an effective algorithm, based on
\Cref{thm:realradical}, to compute $\sqrt[\RR]{I}$. In the case of a
finite real variety, the flat extension criterion
\cite{lasserre_semidefinite_2008,lasserre_moment_2013} certifies that
$(\ann_d(\sigma^*)) = \sqrt[\RR]{I}$ for some $d \in \N$. We will
focus in the positive dimensional case, when such a criterion cannot apply.

\subsection{Polynomial Optimization and Exactness}

Let $f, \vb g \in \RRg$. The goal of Polynomial Optimization is to find:
\begin{equation}\label{eq:pop}
      f^* \coloneqq \inf \ \big\{ \, f(x)\in \R \mid x \in \R^n, \ g_i(x) \ge 0 \ \textup{ for } i=1, \ldots,s \,\big\}.
    \end{equation}
    that is the infimum $f^{*}$
of the \emph{objective function} $f$ on the \emph{basic semialgebraic set} $S \coloneqq \{ \, x \in \R^n \mid \ g_i(x) \ge 0 \ \textup{ for } i=1, \ldots,s \, \} $. In particular we will consider the case of equalities $h_i = 0$, obtained as $\pm h_i \ge 0$. To solve problem \eqref{eq:pop}
Lasserre \cite{lasserre_global_2001} proposed to use two hierarchies of finite dimensional convex cones depending on an order $d\in \N$. We describe the Moment Matrix hierarchy and the property of \emph{exactness}, see \cite{baldi:hal-03082531} for more details.

\begin{definition}
We define the \emph{MoM relaxation of order $d$} of problem \eqref{eq:pop} as $\cL_{2d}(\vb g)$ and the infimum:
\begin{equation}
\label{def::momrel}
  f^*_{\mom,d}  \coloneqq \inf \big\{ \, \val{\sigma}{f} \in \R \mid \sigma \in \cL_{2d}(\vb g), \ \val{\sigma}{1} = 1 \,\big\}.
\end{equation}
\end{definition}

We will call Problem \eqref{def::momrel} a \emph{Moment Optimization
  Problem} (MOP). It can be efficiently solved by semidefinite
programming, using interior point methods. Taking $f=1$, these methods
yield an interior point of $\cL_{2d}(\vb g)$, that is a {\em generic}
element $\sigma^{*}$ in $\cL_{2d}(\vb g)$.

Usually we are interested in minimizers of $f$ with bounded norm,
i.e. minimizers in some closed ball defined by $r-\norm{\vb x}^2 \ge
0$ (\emph{Archimedean condition}). If the Archimedean condition and
some regularity conditions at the minimizers of $f$ hold (known as
\emph{Boundary Hessian Conditions} or BHC), the MoM relaxation is
\emph{exact}: for some $d\in \N$ the minimum is reached, i.e. $f^* =
f^*_{\mom,d}$, and we can effectively recover the minimizers (see
\cite[th. 4.8]{baldi:hal-03082531}). Using the flat extention
criterion for the Hankel matrix $H_{\sigma}^d$ (associated to a
minimizing moment sequence $\sigma$) we can effectively test
exactness. As BHC hold generically, exactness is also generic (see
\cite[cor. 4.9]{baldi:hal-03082531}).

\section{Orthogonal polynomials and annihilator}\label{sec:orthogonal}

To compute the real radical, we need to compute a basis of the
annihilator of a truncated positive linear functional $\sigma \in (\RRg_{2d})^{*}$ such that
$\val{\sigma}{p^{2}}\ge 0$ for $p\in \RRg_{d}$.
In this section, we describe an
efficient algorithm to compute a basis of $\ann_{d}(\sigma)=\{ p\in \RRg_{d} \mid
p \star \sigma =0\}= \{p \in \RRg_{d}\mid \val{\sigma}{p^{2}}=0\}$.
It is a Gram-Schmidt orthogonalization process, using
the inner product $\langle \cdummy, \cdummy
\rangle_{\sigma}$ defined, for $p,q\in \RRg_{d}$, by
$$
\val{p}{q}_{\sigma} \assign \val{\sigma}{p\,q}.
$$

By ordering the monomials basis of $\RRg_{d}$ and projecting
successively a monomial $\vb x^{\alpha}$ onto the space spanned by the previous
monomials, we construct monomial basis $\vb b = \{ \vb{x}^{\beta} \}$ of
$\RRg_{d}/\ann_{d}(\sigma)$, a corresponding basis of orthogonal
polynomials $\vb{p}= (p_{\beta})$ and a basis $\vb{k}= (k_{\gamma})$
of $\ann_{d}(\sigma)$. The orthogonal polynomials are such that
\[ \langle p_{\beta}, p_{\beta'} \rangle_{\sigma} = \left\{ \begin{array}{ll}
     >0 & \textup{if } \beta = \beta'\\
     0  & \textup{otherwise},
\end{array} \right. \]
and for all $\beta, \gamma$, we have $\val{p_{\beta}}{k_{\gamma}}_{\sigma}=\val{k_{\gamma}}{k_{\gamma}}_{\sigma}=0$.

To compute these polynomials, we use a projection defined on the
orthogonal of the space
spanned by orthogonal polynomials $\vb{p}= [p_1, \ldots, p_l]$ such
that $\val{p_{i}}{p_{i}}_{\sigma}>0$ and
$\val{p_{i}}{p_{j}}_{\sigma}=0$ if $i\neq j$, as follows: for $f \in \RRg_{d}$,
$$
\proj(f,\vb p)= f - \sum_{i=1}^{l} {\val{f}{p_{i}}_{\sigma}\over
  \val{p_{i}}{p_{i}}_{\sigma}} \, p_{i}.
$$
By construction, we have $\val{\proj(f,\vb p)}{p_{i}}_{\sigma}=0$ for $i=1,\ldots,l$.
In practice, the implementation of this projection is done by the so-called
Modified Gram-\-Schmidt projection algorithm, which is known to have a
better numerical behavior than the direct Gram-Schmidt
orthogonalization process {\cite{trefethen_numerical_1997}}[Lecture 8].

To compute a basis of $\ann_{d}(\sigma)$, we choose a monomial
ordering $\prec$ compatible with the degree (e.g. the graded
reverse lexicographic ordering) and build the list of monomials $\vb s$
of degree $\le d$ in increasing order for this ordering $\prec$.
Algorithm \ref{algo:orthogonal} chooses incrementally a new monomial in the list
$\vb s$ and projects it on the space spanned by the previous
orthogonal polynomials. The new monomials computed by the function
$\textup{next}(\vb s,\vb b, \vb l)$ are the monomials with the lowest degree in $\vb s$, ordered w.r.t. $\prec$, not
in $\vb b$ and not divisible by a monomial of $\vb l$.

{\begin{algorithm}\caption{\label{algo:orthogonal}Orthogonal polynomials
      and annihilator of $\sigma$}
{\strong{Input:} a positive linear functional $\sigma\in
      \RRg_{2d+2}^{*}$.

\begin{itemize}
  \item Let $\vb{b} \assign []$; $\vb{p} \assign []$; $\vb{k} \assign []$;
  $\vb{l}= []$; $\vb{n} \assign [1]$; $\vb{s}
  \assign [\vb x^{\alpha},  |\alpha|\le d]$;

  \item while $\vb{n} \neq \emptyset$ do
  \begin{itemize}
    \item for each $\vb x^\alpha \in \vb{n}$,
      \begin{enumerate}
      \item $p_{\alpha} \assign \proj (\vb{x}^{\alpha}, \vb p)$;

      \item compute $v_{\alpha}=\val{p_{\alpha}}{p_{\alpha}}_{\sigma}$;
      \item if $v_{\alpha}\neq 0$ then

      \ \ add $\vb x^{\alpha}$ to $\vb{b}$;
      \ add $p_{\alpha}$ to $\vb{p}$;

      else

      \ \ add $k_{\alpha} := p_{\alpha}$ to $\vb k$;
      \ add $\vb x^\alpha$ to $\vb{l}$;

      end;
    \end{enumerate}
    \item $\vb{n} \assign \textup{next} (\vb{s}, \vb{b}, \vb{l}) ;$
  \end{itemize}
\end{itemize}
{\strong{Output:}}
\begin{itemize}
  \item a basis  $\vb{k}= [k_{\gamma}]_{\vb x^{\gamma} \in \vb{l}}$ of the annihilator $\ann_{d}(\sigma)$ and their leading monomials $\vb l=[\vb x^{\gamma}]$;
  \item a basis of orthogonal polynomials $\vb{p}= [p_{\beta_i}]$;
  \item a monomial set $\vb{b}= [\vb x^{\beta_1}, \ldots, \vb x^{\beta_r}]$.
\end{itemize}}
\end{algorithm}}

By construction, the vector space spanned by $\vb b$ and $\vb p$ are
equal at each loop of the algorithm.
As the function $\textup{next}(\vb s,\vb b, \vb l)$ outputs
monomials in $\vb s$ greater than $\vb b$ then the monomials in $\vb n$ are greater than the
monomials in $\vb b$.
Thus, the leading term of $k_{\gamma}\in \vb k$ is $\vb x^{\gamma}$.

Let $\vb k$, $\vb l$, $\vb p$, $\vb b$  denote the output of Algorithm \ref{algo:orthogonal}. For $\alpha\in \NN^{n}$, let $(\vb k)_{\preceq \alpha}$ be the
vector space spanned by the elements of the form $\vb x^{\delta}
k_{\gamma}$ with $\delta+\gamma \preceq \alpha$. Similarly, $\vb
p_{\preceq \alpha}$ is the set of $p_{\beta} \in \vb p$ such that
$\beta \preceq \alpha$.
We prove that $\vb k$ is a Grobner basis of $\ann_{d}(\sigma)$, that
is any element of $\ann_{d}(\sigma)$ reduces to $0$ by $\vb k$:

\begin{proposition} Let $\sigma\in
      \RRg_{2d+2}^{*}$, $\vb k, \vb p$ be the output of \Cref{algo:orthogonal}.
For $\vb x^{\alpha}\in (\vb l)_{d}$, i.e.  divisible by a monomial in $\vb
l$ and of degree $|\alpha|\le d$, $p_{\alpha}= \proj(\vb x^{\alpha},
\vb p_{\preceq \alpha})$ is in $(\vb k)_{\preceq \alpha} \subset \ann_{d}(\sigma)$.
\end{proposition}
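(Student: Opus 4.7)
The plan is to proceed by strong induction on $\alpha$ with respect to the monomial order $\prec$, strengthening the inductive hypothesis to the following statement about \emph{any} monomial $\vb x^{\alpha'}$ with $|\alpha'| \le d$ and $\alpha' \preceq \alpha$: there exists a decomposition $\vb x^{\alpha'} = R_{\alpha'} + Q_{\alpha'}$ with $R_{\alpha'} \in (\vb k)_{\preceq \alpha'}$ and $Q_{\alpha'} \in \vspan{\vb b_{\preceq \alpha'}}$. This reinforced IH is immediate when $\vb x^{\alpha'} \in \vb b$ (take $R_{\alpha'} = 0$) and when $\vb x^{\alpha'} \in \vb l$ (take $R_{\alpha'} = k_{\alpha'}$, using the Gram--Schmidt identity $\vb x^{\alpha'} = k_{\alpha'} + q_{\alpha'}$ with $q_{\alpha'}\in \vspan{\vb p_{\prec \alpha'}} = \vspan{\vb b_{\prec \alpha'}}$). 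The only remaining case, $\vb x^{\alpha'} \in (\vb l)_d \setminus \vb l$, is exactly the inductive instance provided by the proposition.

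For the inductive step at $\vb x^\alpha \in (\vb l)_d \setminus \vb l$, I pick $\gamma \in \vb l$ with $\vb x^\gamma \mid \vb x^\alpha$ and factor $\vb x^\alpha = \vb x^\delta \vb x^\gamma$ with $|\delta| \ge 1$. Writing $k_\gamma = \vb x^\gamma - q_\gamma$, where $q_\gamma \in \vspan{\vb p_{\prec \gamma}}$ is a combination of monomials in $\vb b_{\prec \gamma}$, multiplication by $\vb x^\delta$ yields
\[
\vb x^\alpha \;=\; \vb x^\delta k_\gamma + \vb x^\delta q_\gamma.
\]
Each monomial $\vb x^{\alpha_j}$ appearing in $\vb x^\delta q_\gamma$ satisfies $\alpha_j \prec \alpha$ (by compatibility of $\prec$ with multiplication) and $|\alpha_j| \le |\alpha| \le d$ (by the graded property of $\prec$). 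Applying the IH to each $\vb x^{\alpha_j}$ and collecting the pieces gives $\vb x^\alpha = R_\alpha + Q_\alpha$ where $R_\alpha = \vb x^\delta k_\gamma + \sum_j c_j R_{\alpha_j}\in (\vb k)_{\preceq \alpha}$ and $Q_\alpha = \sum_j c_j Q_{\alpha_j} \in \vspan{\vb b_{\prec \alpha}}$, which establishes the strengthened IH at $\alpha$.

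The proposition's claim now follows by comparing this algebraic decomposition to the Gram--Schmidt projection. Since $\vspan{\vb b_{\prec \alpha}} = \vspan{\vb p_{\prec \alpha}}$ and $\alpha \notin \vb b$, the projection $p_\alpha = \proj(\vb x^\alpha, \vb p_{\preceq \alpha}) = \proj(\vb x^\alpha, \vb p_{\prec \alpha})$ satisfies $\proj(Q_\alpha, \vb p_{\prec \alpha}) = 0$, reducing the goal to showing $\proj(R_\alpha, \vb p_{\prec \alpha}) = R_\alpha \in (\vb k)_{\preceq \alpha}$, i.e., that $R_\alpha$ is $\sigma$-orthogonal to every $p_\beta$ with $\beta \in \vb b_{\prec \alpha}$.

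This orthogonality, together with the stated inclusion $(\vb k)_{\preceq \alpha} \subset \ann_d(\sigma)$, is the main obstacle and the crux of the proof. Each contributing inner product has the form $\val{\sigma}{\vb x^{\delta'} k_{\gamma'} p_\beta} = \val{\sigma}{k_{\gamma'} \cdot \vb x^{\delta'} p_\beta}$, which I would kill using the Cauchy--Schwarz inequality for $\sigma$: since $\sigma \in \RRg_{2d+2}^*$ is psd, $\sigma$ is psd on $\RRg_{d+1}$, so $\sigma(k_{\gamma'}^2) = 0$ bootstraps (via Cauchy--Schwarz) to $k_{\gamma'} \in \ann_{d+1}(\sigma)$. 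The delicate point is the degree bookkeeping, ensuring $\deg(\vb x^{\delta'} p_\beta) = |\delta'| + |\beta|$ stays within the range where this annihilation applies: the constraints $\delta' + \gamma' \preceq \alpha$ and $\beta \prec \alpha$, combined with the graded property of $\prec$, must be leveraged to keep this degree under control, and the same Cauchy--Schwarz bound simultaneously yields $(\vb k)_{\preceq \alpha} \subset \ann_d(\sigma)$.
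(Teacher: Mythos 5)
Your overall strategy is a genuine variant of the paper's: where the paper inducts directly on the orthogonal polynomials $p_\alpha$, expressing $p_\alpha = x_{i_0} p_{\alpha''} + \text{(lower-order $p$'s)}$ and showing the undesirable coefficients $\mu_\beta$ vanish, you induct on the monomials $\vb x^\alpha$, maintaining the decomposition $\vb x^\alpha = R_\alpha + Q_\alpha$ with $R_\alpha \in (\vb k)_{\preceq\alpha}$ and $Q_\alpha \in \vspan{\vb b_{\prec\alpha}}$. Both reduce the proposition to the orthogonality of $(\vb k)_{\preceq\alpha}$ against $\vb p_{\prec\alpha}$ under $\langle\cdot,\cdot\rangle_\sigma$. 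The difference is that the paper's one-variable-at-a-time decomposition makes the degree control essentially automatic, while yours postpones it to the end, where it becomes the crux you flag but do not resolve.

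That flagged step is where the proof has a genuine gap, and your proposed mechanism does not close it. You want $\val{\sigma}{\vb x^{\delta'} k_{\gamma'}\,p_\beta}=0$ by moving $\vb x^{\delta'}$ onto $p_\beta$ and invoking $k_{\gamma'}\in\ann_{d+1}(\sigma)$. That only works if $\deg(\vb x^{\delta'}p_\beta)\le d+1$. But from $\delta'+\gamma'\preceq\alpha$ and $\beta\prec\alpha$ the graded order gives only $|\delta'|\le|\alpha|-1\le d-1$ and $|\beta|\le|\alpha|\le d$, so $|\delta'|+|\beta|$ can reach $2d-1$, far above $d+1$. No amount of ``leveraging the constraints'' saves this bound; a single Cauchy--Schwarz step is not enough. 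The correct fix is to keep $\vb x^{\delta'}$ attached to $k_{\gamma'}$ and prove, by a further induction on $|\delta'|$, that $\vb x^{\delta'}k_{\gamma'}\in\ann_{d+1}(\sigma)$ whenever $|\delta'|+|\gamma'|\le d$: assuming $\vb x^{\delta''}k_{\gamma'}\in\ann_{d+1}(\sigma)$ with $\delta''=\delta'-e_i$, one computes $\val{\sigma}{(\vb x^{\delta'}k_{\gamma'})^2} = \val{\sigma}{(\vb x^{\delta''}k_{\gamma'})\cdot x_i^2\,\vb x^{\delta''}k_{\gamma'}} = 0$ because $\deg(x_i^2\vb x^{\delta''}k_{\gamma'})\le d+1$, and psd-ness of $\sigma$ on $\RRg_{d+1}$ then gives $\vb x^{\delta'}k_{\gamma'}\in\ann_{d+1}(\sigma)$ by Cauchy--Schwarz. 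This yields both $(\vb k)_{\preceq\alpha}\subset\ann_d(\sigma)$ and the needed orthogonality $\val{\sigma}{\vb x^{\delta'}k_{\gamma'}\,p_\beta}=0$ (since $p_\beta\in\RRg_d$). Note that this iterated one-step-at-a-time Cauchy--Schwarz is precisely what the paper's proof encodes: its inductive step multiplies $p_{\alpha''}$ by a single variable $x_{i_0}$, so the degree increment stays at one and $\ann_d(\sigma)\subset\ann_{d+1}(\sigma)$ suffices.
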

\begin{proof}
Let us prove it by induction on the ordering of $\alpha$. The lowest
element in $(\vb l)_{d}$ is a monomial $\vb x^{\gamma}$ of $\vb
l$. As $k_{\gamma}= \proj(\vb x^{\gamma}, \vb p_{\preceq  \gamma})$ is such that
$\val{k_{\gamma}}{k_{\gamma}}_{\sigma}=\val{\sigma}{k_{\gamma}^{2}}=0$,
$k_{\gamma} = \proj(\vb x^{\gamma},\vb p_{\preceq  \gamma}) \in
(\vb k)_{\preceq \gamma} \subset \ann_{d}(\sigma)$.
Then the induction hypothesis is true for
the lowest monomial of $(\vb l)_{d}$.

Assume that it is true for $\vb x^{\alpha'} \in (\vb l)_{d}$ and for all the smaller monomials w.r.t. $\prec$. Let
$\vb x ^{\alpha}$ be the next monomial in $(\vb l)_{d}$ for the monomial ordering
$\prec$.
Then, there exists $\vb x^{\alpha''}\in (\vb l)_{\preceq \alpha'}$ and $i_{0}\in
1,\ldots,n$ such that $x_{i_{0}} \vb x^{\alpha''} = \vb
x^{\alpha}$. As $p_{\alpha} - x_{i_{0}} p_{\alpha''}$ has a leading
term smaller that $\vb x^{\alpha}$, it can be written as a linear combination
of $p_{\alpha'}= \proj(\vb x^{\alpha'}, \vb p_{\prec \alpha'})$ with $\alpha'\prec \alpha$.
More precisely, we have
\[
p_{\alpha} = x_{i_{0}} p_{\alpha''} + \sum _{\delta \prec \alpha, \vb
x^{\delta} \in (\vb l)_{d}} \lambda_{\delta}\, p_{\delta}  +
\sum_{\beta \prec \alpha, \vb x^{\beta}\in \vb b} \mu_{\beta}\, p_{\beta},
\]
for some $\lambda_{\delta}, \mu_{\beta} \in \R$.

By induction hypothesis, $p_{\alpha''}, p_{\delta}\in (\vb
k)_{\preceq\alpha'}\subset (\vb k)_{\preceq\alpha}\subset
\ann_{d}(\sigma)$. Moreover, as $p_{\alpha''}\in
\ann_{d}(\sigma)\subset \ann_{d+1}(\sigma)$, for any $p\in \RRg_{d}$
we have
$\val{x_{i_{0}} p_{\alpha''}}{p}_{\sigma} =
\val{p_{\alpha''}}{x_{i_{0}}\, p}_{\sigma}=0$.
This shows that $x_{i_{0}} p_{\alpha''} \in (\vb k)_{\preceq \alpha} \cap \ann_{d}(\sigma)$.

By definition of $p_{\alpha}=\proj(\vb x^{\alpha}, \vb p_{\prec\alpha})$, $\val{p_{\alpha}}{p_{\beta}}_{\sigma}=0$ for
$\vb x^{\beta}\in \vb b_{\prec\alpha}$ so that
$
\mu_{\beta} = {\val{p_{\alpha}}{p_{\beta}}_{\sigma} \over
  \val{p_{\beta}}{p_{\beta}}_{\sigma}} = 0
$
and $p_{\alpha}\in (\vb k)_{\preceq \alpha} \cap \ann_{d}(\sigma)$.

As $(\vb k)_{\preceq \alpha}=(\vb k)_{\preceq
  \alpha'}+\vspan{p_{\alpha}}$, we have $(\vb k)_{\preceq \alpha}\subset
\ann_d(\sigma)$, which proves the induction hypothesis for $\alpha$ and
concludes the proof.
\end{proof}

This proposition explains why the function $\textup{next}(\vb s,\vb b, \vb l)$
only outputs the monomials with the lowest degree in $\vb s$, ordered w.r.t. $\prec$, not
in $\vb b$ and not divisible by a monomial of $\vb l$.

This algorithm is an optimization of Algorithm 4.1 in
{\cite{mourrain_polynomialexponential_2018}} or
Algorithm 3.2 in \cite{mourrain:hal-01515366}. It strongly exploits
the positivity of the linear functional $\sigma$ and improves
significantly the performance. We will illustrate its behavior in Section \ref{sec:example}.

\begin{remark}\label{rem:flatext}
When the real variety $\cV_{\RR}(\vb f)$ is finite, the flat extension
test on the rank of $H_{\sigma}^{k}$ can be replaced by testing that
the set $\vb l$ of initial terms contains a power of each variable
$x_{i}$. This is equivalent to the fact that $\R[\vb x]/(\vb k)$ is finite
dimensional or equivalently that the rank of $H_{\sigma}^{d}$ is constant for
$d\gg 0$.
\end{remark}

\section{Real irreducible components}\label{sec:isreal}
We introduce an effective algorithm for testing real radicality in the irreducible case.
\subsection{Genericity}
Let $\C^N$ be the $N$-dimensional affine space and $\C[t_1,\dots,t_N] = \C[\vb t]$ be its coordinate (polynomial) ring. We say that a property holds \emph{generically} in $\C^N$ if there exists finitely many nonzero polynomials
$\phi_1,\dots,\phi_l \in \C[\vb t]$ such that, for $\xi \in \C^N$,
when $\phi_1(\xi)\neq 0,\dots,\phi_l(\xi)\neq 0$ the
property holds for $\xi$.

In particular we will consider linear maps $A \in \hom_{\C}(\C^n,\C^{k+1})$ as elements in $\C^{n(k+1)}$ in the natural way, and thus talk about \emph{generic linear maps}.

\subsection{Smooth Complex and Real Zeros}
We recall the definition of \emph{smooth zero}. We refer to
\cite{shafarevich_basic_2013-1} for the complex case and to
\cite{marshall_positive_2008} for the real case.

We say that a variety $V\subset \C^{n}$ is {\em  defined over $\R$}, if
$\cI{(V)}$ is generated by a family of polynomials with coefficient in $\R$. For $A\subset \C^n$ we denote by $\cl{A}$ its Zariski closure.

Hereafter $\K$ denotes a field of characteristic $0$ and $\overline{\K}$ its algebraic closure.

\begin{definition}
    Let $I = (f_1,\dots ,f_m) \subset \K[\vb x]$ be a prime ideal and $V = \cV_{\overline{\K}}(I)$. We say that $\xi \in \cV_{\K}(I)$ is a \emph{smooth zero} of $I$ if $\rank \jac (f_1,\dots,f_m)(\xi) = n-\dim V$.
\end{definition}
For $\K = \C$ the mapping $V \mapsto \cI_{\C}(V)$ is a bijection between irreducible varieties in $\C^n$ and prime ideals. Moreover, for a prime ideal $I$, smooth zeros of $I$ and smooth points of $\cV_{\C}(I)$ coincide, and they are dense. On the other hand for $\K = \R$ the mapping $V \mapsto \cI_{\R}(V)$ is a bijection between irreducible varieties in $\R^n$ and prime ideals which are real radical. For prime ideals $I$ which are not real radical, smooth zeros of $I$ are not dense in $\cV_{\R}(I)$.

\begin{example} Here are examples of reducible
  and irreducible algebraic varieties with dense complex smooth points but with no real smooth point.
    \begin{itemize}
        \item $I=(x^2+y^2)\subset\R[x,y]$ is a prime, non real radical ideal, as $\cV_{\R}(I) = \{(0,0)\}$ and $\sqrt[\R]{I} = (x, y)$. $I$ does not have smooth real zeros. Notice that $(x^2+y^2)\subset\C[x,y]$ is not prime, since $x^2+y^2 = (x+iy)(x-iy)$.
        \item $I=(x^2+y^2+z^2)\subset\R[x,y,z]$ is a prime, non real radical ideal, as $\cV_{\R}(I) = \{(0,0,0)\}$ and $\sqrt[\R]{I} = (x, y, z)$. $I$ does not have smooth real zeros. In this case $(x^2+y^2+z^2)\subset\C[x,y,z]$ is prime, since $x^2+y^2+z^2$ is irreducible over $\C$.
    \end{itemize}
\end{example}

We recall criterions for testing whether a prime ideal $I \subset \RRg$ is real radical or not.

\begin{theorem}[{Simple Point Criterion \cite[th. 12.6.1]{marshall_positive_2008}}]
\label{thm::simple_point}
  Let $I$ be a prime ideal of $\RRg$. The following are equivalent:
  \begin{itemize}
    \item $I$ is a real radical ideal;
    \item $I = \cI(\cV_{\R}(I))$;
    \item $\cl{\cV_{\R}(I)} = \cV_{\C}(I)$;
    \item $I$ has a smooth real zero.
  \end{itemize}
\end{theorem}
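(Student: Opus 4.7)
The plan is to establish the cycle $(1) \Leftrightarrow (2) \Leftrightarrow (3) \Rightarrow (4) \Rightarrow (3)$, which proves all four conditions are equivalent. The first three implications are formal consequences of the Nullstellens\"atze and elementary dimension theory; the real analytic content sits in $(4) \Rightarrow (3)$.

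For $(1) \Leftrightarrow (2)$, the Real Nullstellensatz recalled in \Cref{sec:radicals} gives $\cI(\cV_{\R}(I)) = \sqrt[\R]{I}$, so $I = \sqrt[\R]{I}$ is exactly $I = \cI(\cV_{\R}(I))$. For $(2) \Leftrightarrow (3)$, I would apply Hilbert's Nullstellensatz over $\C$, which says $\cV_{\C}(\cI(A)) = \cl{A}$ for $A \subset \C^n$. Since $I$ is prime (hence radical) we have $\cI(\cV_{\C}(I)) = I$; applying $\cV_{\C}$ to $(2)$ yields $\cl{\cV_{\R}(I)} = \cV_{\C}(I)$, and applying $\cI$ to $(3)$ recovers $(2)$.

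For $(3) \Rightarrow (4)$, the set $U \subset \cV_{\C}(I)$ of points where the Jacobian of a generating set $\vb f$ of $I$ attains its maximal rank $n - \dim \cV_{\C}(I)$ is Zariski open and dense in $\cV_{\C}(I)$ (this is the Jacobian criterion; its complement is a proper closed subvariety containing singular loci of irreducible components and pairwise intersections of components). By $(3)$, $\cV_{\R}(I)$ is Zariski dense in $\cV_{\C}(I)$, so it cannot be entirely contained in the complement of $U$; any $\xi \in U \cap \cV_{\R}(I)$ is then a smooth real zero.

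The main obstacle is $(4) \Rightarrow (3)$, where a subtle reducibility issue appears. Since $I$ is prime in $\RRg$, $\cV_{\C}(I)$ is either irreducible or the union of two complex conjugate irreducible components of equal dimension; in the latter case, every real point $\xi = \overline{\xi}$ lies in the intersection of the two components (as $\xi$ in a component forces $\overline{\xi}$ in its conjugate), and this intersection is contained in the singular locus, so $(4)$ forces $\cV_{\C}(I)$ to be irreducible of some dimension $d$. Now let $\xi$ be a smooth real zero with $\rank \jac(\vb f)(\xi) = n-d$. By the real implicit function theorem there is a real analytic immersion $\phi \colon U_{0} \to \R^{n}$ on a connected open $U_{0} \subset \R^{d}$ with $\phi(U_{0}) \subset \cV_{\R}(I)$ a $d$-dimensional real analytic manifold through $\xi$. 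I would then complexify: $\phi$ extends to a holomorphic immersion $\phi_{\C} \colon U_{0}^{\C} \to \C^{n}$ on a connected complex neighborhood $U_{0}^{\C} \subset \C^{d}$ of $U_{0}$. For every $f \in I$ the holomorphic function $f \circ \phi_{\C}$ vanishes on the totally real slice $U_{0}$, hence by the identity principle vanishes on all of $U_{0}^{\C}$; thus $\phi_{\C}(U_{0}^{\C}) \subset \cV_{\C}(I)$. The same identity argument applied to any $g \in \CRg$ vanishing on $\phi(U_{0})$ shows $g$ also vanishes on $\phi_{\C}(U_{0}^{\C})$, so $\phi_{\C}(U_{0}^{\C}) \subset \cl{\phi(U_{0})}$. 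Since $\phi_{\C}(U_{0}^{\C})$ is a $d$-dimensional complex analytic subset of the irreducible $d$-dimensional variety $\cV_{\C}(I)$, its Zariski closure equals $\cV_{\C}(I)$. Chaining inclusions, $\cV_{\C}(I) = \cl{\phi_{\C}(U_{0}^{\C})} \subset \cl{\phi(U_{0})} \subset \cl{\cV_{\R}(I)} \subset \cV_{\C}(I)$, yielding $(3)$.
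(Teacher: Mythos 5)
The paper does not prove this theorem; it states it with a citation to Marshall's book (\cite[th.~12.6.1]{marshall_positive_2008}), so there is no in-paper argument to compare against. Your proof is a genuine, self-contained argument via complexification and the identity principle, which is a different (more analytic) route than the algebraic one in Marshall (which goes through the existence of orderings on the residue field / the Artin--Lang theorem). Both are valid; yours is more transparent geometrically, while Marshall's extends more readily to general real closed fields.

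A few places where you are leaning on facts that deserve a sentence of justification. First, in $(2)\iff(3)$ you implicitly use that $I\,\CRg$ is radical when $I\subset\RRg$ is prime (needed so that $\cI_{\C}(\cV_{\C}(I))=I\,\CRg$ and hence $\cI_{\C}(\cV_{\C}(I))\cap\RRg=I$); this holds because $\C/\R$ is separable. Second, in $(4)\Rightarrow(3)$ the step ``$\phi(U_0)\subset\cV_{\R}(I)$'' is not immediate from the implicit function theorem: the IFT gives you a $d$-dimensional manifold $M$ cut out by $n-d$ of the generators whose gradients are independent at $\xi$, not by all of $I$. One must observe that near $\xi$ the $d$-dimensional irreducible variety $\cV_{\C}(I)$ is contained in the smooth, irreducible, $d$-dimensional complex IFT germ $\cV_{\C}(f_1,\dots,f_{n-d})$, so they coincide locally; only then does every $f\in I$ vanish on $M$. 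Third, the dichotomy ``$\cV_{\C}(I)$ is irreducible or a union of exactly two conjugate components'' uses $[\C:\R]=2$ (equivalently, that $\text{Frac}(\RRg/I)\otimes_{\R}\C$ is a field or a product of two fields); it is standard but worth stating. None of these are errors, only places where the argument is compressed.
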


\begin{definition}
  If $V \subset \CC^n$ then $V_{\RR}$ denotes the real points of $V$, i.e. $V_{\RR} = V \cap \conj{V} = V \cap \R^n$.
\end{definition}

Let $V \subset \C^n$ be an irreducible variety defined over $\R$ and $I \subset \RRg$ the ideal defined by its real generators. If follows from \Cref{thm::simple_point} that $V_{\RR} = \cV_{\R}(I)$ is Zariski dense in $V$ if and only if $I$ is a real radical ideal. In this case we say that $V$ is \emph{real}.

For hypersurfaces there exists another criterion based on the change of sign of the defining polynomial.
\begin{theorem}[{Sign Changing Criterion \cite[th. 12.7.1]{marshall_positive_2008}}]
\label{thm::sign_changing}
  Let $f \in \RRg$ be an irreducible polynomial. The following are equivalent:
  \begin{itemize}
    \item $(f)$ is a real radical ideal;
    \item $(f)$ has a smooth real point (i.e. there exists $\xi \in \cV_{\R}(I)$ such that $\grad{f}(\xi) \neq 0$);
    \item the polynomial $f$ changes sign in $\R^n$ (i.e. there exists $x,y\in \R^n$ such that $f(x)f(y)<0$).
  \end{itemize}
\end{theorem}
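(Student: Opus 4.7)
The plan is to prove the three implications $(1) \Leftrightarrow (2)$ directly from the Simple Point Criterion, $(2) \Rightarrow (3)$ by local analysis at a regular zero, and $(3) \Rightarrow (2)$ by a generic-line argument exploiting that the singular locus of an irreducible hypersurface is small. Since $f$ is irreducible in $\RRg$, the ideal $(f)$ is prime, so $(1) \Leftrightarrow (2)$ follows immediately from \Cref{thm::simple_point}. For $(2) \Rightarrow (3)$, if $\xi \in \cV_{\R}(f)$ has $\nabla f(\xi) \neq 0$, the first-order Taylor expansion $f(\xi + t\,\nabla f(\xi)) = t\,\|\nabla f(\xi)\|^{2} + O(t^{2})$ produces both positive and negative values of $f$ in every neighborhood of $\xi$.

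For $(3) \Rightarrow (2)$ the crux is to control the real singular locus $S = \{\, x \in \R^{n} : f(x) = 0,\ \nabla f(x) = 0 \,\}$. I would first rule out the possibility $f = c\, g\, \bar{g}$ with $g \in \C[\vb{x}] \setminus \RRg$: in that case $f$ would be sign-definite on $\R^{n}$, contradicting $(3)$. Hence $f$ is also irreducible in $\C[\vb{x}]$, so $V = \cV_{\C}(f)$ is an irreducible complex hypersurface, its singular locus $\Sing(V) = \{\, f = 0,\ \nabla f = 0 \,\}$ is a proper subvariety of $V$ of complex dimension at most $n-2$, and therefore $S \subseteq \Sing(V) \cap \R^{n}$ has real dimension at most $n-2$.

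With this dimension bound, I pick $x_{+}, x_{-} \in \R^{n}$ with $f(x_{+}) > 0 > f(x_{-})$ and consider the pencil of lines $\ell_{v}(t) = x_{+} + t\,v$ indexed by directions $v \in \PP^{n-1}_{\R}$. The set of directions for which $\ell_{v}$ meets $S$ is the image of $s \mapsto [s - x_{+}]$ from $S$, a semialgebraic set of dimension at most $n-2$ inside the $(n-1)$-dimensional space of directions. On the other hand, the directions such that $\ell_{v}$ passes through a small Euclidean ball around $x_{-}$ on which $f < 0$ form a non-empty open set, which therefore contains a direction $v$ avoiding the bad set. For such a $v$, the polynomial $g(t) = f(\ell_{v}(t))$ satisfies $g(0) > 0$ and $g(t^{*}) < 0$ for some $t^{*}$, so by the intermediate value theorem it vanishes at some $t_{0}$; the point $\xi = \ell_{v}(t_{0})$ lies in $\cV_{\R}(f) \setminus S$, hence $\nabla f(\xi) \neq 0$ and $\xi$ is the desired smooth real zero.

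The main obstacle is the combination of the dimension bound on $S$ with the perturbation step. A naive line segment directly from $x_{+}$ to $x_{-}$ is insufficient: the restricted polynomial could vanish only to odd order $\ge 3$ at a singular point, yielding no simple real zero. The perturbation to a generic direction $v$, whose existence is guaranteed precisely by $\dim_{\R} S \le n-2$, is what unlocks the argument and is the only place where $\C$-irreducibility of $f$ (not just $\R$-irreducibility) is exploited.
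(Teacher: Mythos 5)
The paper does not prove this theorem; it cites it verbatim from Marshall \cite[th.~12.7.1]{marshall_positive_2008}, so there is no in-paper argument to compare against. Assessing your proof on its own: it is correct. Since $\RRg$ is a UFD, $(f)$ is prime, so $(1)\Leftrightarrow(2)$ is a direct application of \Cref{thm::simple_point}, and the Taylor expansion along $\nabla f(\xi)$ gives $(2)\Rightarrow(3)$. For $(3)\Rightarrow(2)$ your two pillars are both sound: an $\R$-irreducible $f$ that is reducible over $\C$ must factor as $c\,g\,\bar g$, and then $f=c\,|g|^{2}$ on $\R^{n}$ is sign-semidefinite, contradicting $(3)$, so $f$ is $\C$-irreducible; and $\C$-irreducibility gives $\dim_{\C}\Sing(\cV_{\C}(f))\le n-2$, hence the real singular locus $S$ has semialgebraic dimension at most $n-2$. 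The generic-line sweep then closes the argument: the directions through $S$ form a semialgebraic subset of $\PP^{n-1}_{\R}$ of dimension $\le n-2$, the directions through a negative ball form a non-empty open set of dimension $n-1$, so their difference is non-empty; for such a direction $v$, the intermediate value theorem applied to $t\mapsto f(x_{+}+tv)$ yields a real zero $\xi$ with $\xi\notin S$, i.e.\ a smooth real zero. This is a clean, constructive variant of the textbook proof --- Marshall's argument rests on the same two ingredients (the $c\,g\,\bar g$ dichotomy and the codimension $\ge 2$ of the singular locus) but extracts the smooth point via a dimension count on the boundary $\partial\{f>0\}$ rather than an explicit line pencil --- and your correct emphasis that $\C$-irreducibility is where the dimension bound comes from is the key observation that is often left implicit.
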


\subsection{Test for Real Radicality}

We reduce the problem of testing real radicality to the
hypersurface case, and then use the Simple Point Criterion. For that prupose
we project $V \subset \C^n$, irreducible variety of dimension
$k$, on a linear subspace $\C^{k+1} \subset \C^n$, in such a way $V$
and $\cl{\pi(V)}$ are \emph{birational}. (see
\cite[p.~38]{shafarevich_basic_2013-1} for the definition).

It is classical that every irreducible (affine) variety is birational to an hypersurface. We recall briefly this result to show that we can choose a generic projection as birational morphism, as done for the geometric resolution or rational representation, see for instance \cite{lecerf_computing_2003} or \cite{bostan_algorithmes_2017-1}.

\begin{lemma}
\label{lem::birational_proj}
  Let $V\subset \CC^n$ be an irreducible varierty of dimension $k$ and $\pi \colon \CC^n \to \CC^{k+1}$ be a generic projection. Then $V$ is birational to $\pi(V)$, i.e. $V \cong \cl{\pi(V)}$.
\end{lemma}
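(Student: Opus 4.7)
The plan is to prove the lemma by examining the induced map on function fields: birationality is equivalent to showing that the pullback $\pi^* \colon \CC(\cl{\pi(V)}) \hookrightarrow \CC(V)$ is an isomorphism. Since $\dim V = k$ and $\pi(V)\subset \CC^{k+1}$, for generic $\pi$ the closure $\cl{\pi(V)}$ is a hypersurface of dimension $k$, so both function fields have transcendence degree $k$ over $\CC$; hence $\pi^*$ is automatically a finite algebraic extension, and the task reduces to showing it has degree $1$.

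First I would reduce to standard Noether position. A generic linear change of coordinates in $\CC^n$ puts $V$ in a position where the first $k$ coordinate functions $x_1, \dots, x_k$ restrict to a transcendence basis of $\CC(V)$ over $\CC$; this is a classical Zariski-open condition (see e.g.\ \cite{shafarevich_basic_2013-1}). After this reduction, $\CC(V)$ is a finite, and since $\textup{char}\,\CC = 0$ separable, extension of $\CC(x_1,\dots,x_k)$.

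Next I would invoke the primitive element theorem in its effective form: for infinite base fields, a generic $\CC$-linear combination $y \assign \sum_{i=k+1}^n \lambda_i x_i$ of the remaining coordinates is a primitive element of $\CC(V)/\CC(x_1,\dots,x_k)$. More precisely, there is a nonzero polynomial $\phi(\lambda_{k+1},\dots,\lambda_n)$ whose non-vanishing guarantees that $\CC(V) = \CC(x_1,\dots,x_k,y)$. Composing this linear combination with the projection onto the first $k$ coordinates gives a projection $\pi \colon \CC^n \to \CC^{k+1}$, $(x_1,\dots,x_n)\mapsto (x_1,\dots,x_k,y)$. The pullback then identifies $\CC(\cl{\pi(V)})$ with the subfield $\CC(x_1,\dots,x_k,y)$ of $\CC(V)$, which by the primitive element choice is all of $\CC(V)$.

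The main technical point — and the one I expect to be the only delicate step — is assembling the two genericity conditions (Noether position and primitive element) into a single Zariski-open, non-empty condition on the space $\CC^{n(k+1)}$ of all linear maps $A\colon \CC^n \to \CC^{k+1}$, so that the conclusion is phrased intrinsically for generic $\pi$ rather than only for projections in a preferred coordinate system. This is handled by noting that both conditions are cut out by the non-vanishing of finitely many polynomials in the entries of $A$ (the first from a nonzero minor expressing algebraic independence of $\pi_1(V),\dots,\pi_k(V)$, the second from the discriminant-type polynomial $\phi$ arising in the primitive element argument), and that the resulting open set is non-empty by the coordinate-wise argument above. This yields the desired generic birationality $V \cong \cl{\pi(V)}$.
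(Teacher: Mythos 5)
Your proposal is correct and follows essentially the same route as the paper's sketch: the paper likewise picks $k$ generic linear forms to serve as a transcendence basis (Noether position) and a $(k+1)$-st generic linear form as a primitive element, then combines them into a generic projection $\pi$; you merely spell out the function-field argument and the assembly of the two genericity conditions into a single Zariski-open condition on $\CC^{n(k+1)}$, which the paper handles by citation.
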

\begin{proof}{(sketch)}
  The birational morphism in \cite[p.~39]{shafarevich_basic_2013-1} can be given as a generic projection. Indeed we can choose algebraically independent elements $l_1, \dots, l_k$  generic linear forms in the indeterminates $\vb x$ (see for instance \cite[p.~488]{bostan_algorithmes_2017-1}). The choice of the primitive element $l_{k+1}$ is generic (see for instance \cite[th.~15.8.1]{artin_algebra_2017}: one can choose $l_{k+1}$ as a generic linear form). Then $l_1, \dots, l_{k+1}$ define the projection $\pi \colon\CC^n \to \CC^{k+1}, \ \xi \mapsto (l_1(\xi), \dots , l_{k+1}(\xi))$ and $V$ is birational to $\cl{\pi(V)}$.
\end{proof}

We choose a generic projection defined over $\R$. In this case we
show that $V$ has a smooth real point if and only if $\cl{\pi(V)}$ has
a smooth real point, using the following propositions.

\begin{proposition}
\label{prop::proj_smooth_point}
  Let $V\subset \CC^n$ be an irreducible varierty defined over $\R$ of dimension $k$, and let $\pi \colon \CC^n \to \CC^{k+1}$ be a generic projection defined over $\R$. Then $\cl{\pi(V)}$ is defined over $\R$ and if $V$ has a smooth real point then $\cl{\pi(V)}$ has a smooth real point.
\end{proposition}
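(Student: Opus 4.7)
The first assertion will be straightforward: complex conjugation $c \colon \C^n \to \C^n$ satisfies $c(V)=V$ (since $V$ is defined over $\R$) and $c \circ \pi = \pi \circ c$ (since $\pi$ is defined over $\R$), so $c(\pi(V))=\pi(V)$, whence $c$ also fixes the Zariski closure $W := \cl{\pi(V)}$. Stability under $c$ is equivalent to being defined over $\R$.

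For the second assertion, I will take $\xi$ a smooth real point of $V$, whose complex tangent space $T_\xi V$ has dimension $k$ and is defined over $\R$ (as the complexification of the real tangent space at $\xi$, well-defined since the Jacobian of a real system of generators of $\cI(V)$ at $\xi$ is a real matrix of rank $n-k$), and show that $\pi(\xi)$ is a smooth real point of $W$. The smoothness argument will hinge on two generic conditions on real projections $\pi$: $(a)$ $\ker\pi \cap T_\xi V = \{0\}$, so that $d\pi_\xi|_{T_\xi V}$ is injective, and $(b)$ $V \cap (\xi + \ker\pi) = \{\xi\}$, so that $\pi$ is set-theoretically injective at $\xi$ on $V$. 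Both amount to proper Zariski closed conditions on $\pi$ by a standard dimension count ($\dim\ker\pi + k = n-1 < n$), and since $T_\xi V$ and $V-\xi$ are both $\R$-defined, the conditions remain proper when restricted to real projections.

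Granted $(a)$ and $(b)$, I will apply the complex-analytic inverse function theorem to the holomorphic map $\pi|_V \colon V \to \C^{k+1}$ at the smooth point $\xi$ to obtain a Euclidean neighborhood $U$ of $\xi$ in $V$ on which $\pi$ is a biholomorphism onto a $k$-dimensional complex submanifold $M \subset \C^{k+1}$ passing through $\pi(\xi)$. Since $\dim W = k$ by Lemma~\ref{lem::birational_proj} and $M \subset W$ already has dimension $k$, the analytic germ of $W$ at $\pi(\xi)$ must include $M$ as a top-dimensional component; condition $(b)$ will rule out other points of $V$ contributing further germs of $W$ at $\pi(\xi)$. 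The hardest step, I expect, will be verifying that this germ of $W$ coincides exactly with $M$, which requires ruling out additional analytic branches of $W$ accumulating at $\pi(\xi)$ from unbounded sequences in $V$. This will follow from the fact that a generic linear projection $\pi|_V$ is quasi-finite on a Zariski open subset of $V$, and generically $\pi(\xi)$ lies in the image of this open set. Once the germ equality near $\pi(\xi)$ is established, $\pi(\xi)$ is a smooth point of $W$, and it is real because both $\pi$ and $\xi$ are.
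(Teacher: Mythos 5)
Your route is genuinely different from the paper's and substantially longer. For the first assertion both arguments are fine (the paper notes that $\cI(\cl{\pi(V)})$ is a real elimination ideal, you use conjugation stability — equivalent, both standard). For the second assertion the paper never touches tangent spaces, the inverse function theorem, or genericity of $\pi$ at all: it simply applies the Simple Point Criterion (\Cref{thm::simple_point}) in both directions. Since $V$ has a smooth real point, $V_\RR$ is Zariski dense in $V$; the image of a dense set under a morphism is dense in the image, and $\pi(V_\RR) \subset (\pi(V))_\RR$ because $\pi$ is real, so $(\pi(V))_\RR$ is Zariski dense in the irreducible variety $\cl{\pi(V)}$; applying the criterion in the reverse direction gives a smooth real point of $\cl{\pi(V)}$. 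That argument is about five lines and works for \emph{any} real linear map $\pi$, not just a generic one — a fact your local, analytic argument cannot recover, since conditions such as $\ker\pi \cap T_\xi V = \{0\}$ genuinely require genericity.

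Your argument, as sketched, also has a concrete gap at exactly the step you flagged as the hardest. Ruling out extra analytic branches of $W=\cl{\pi(V)}$ at $\pi(\xi)$ coming from points of $V$ escaping to infinity is \emph{not} a consequence of quasi-finiteness on a dense open subset together with $\pi(\xi)$ lying in its image; quasi-finiteness controls fibre cardinality, not the behaviour of sequences $q_m \in V$ with $\pi(q_m) \to \pi(\xi)$ and $q_m \to \infty$. What you actually need is properness of $\pi|_V$, which does hold for a generic $\pi$: the closure $\bar V \subset \PP^n$ meets the hyperplane at infinity in a set of dimension $\le k-1$, while the projective center of a linear projection to $\C^{k+1}$ has dimension $n-k-2$, so generically they are disjoint, making $\bar V \to \PP^{k+1}$ a morphism and $\pi|_V$ proper. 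With that substitution the local argument can be completed — the germ of $W$ at $\pi(\xi)$ is then pure $k$-dimensional, finitely many branches correspond bijectively to points of $V$ over $\pi(\xi)$, and condition $(b)$ leaves only the smooth branch $M$ — but it is considerably heavier machinery than the paper's density argument.
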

\begin{proof}
    Let $\pi \colon \CC^n \to \CC^{k+1}$ be a generic projection defined
    over $\R$. As $V$ is defined over $\R$, $\cl{\pi(V)}$ is also defined over $\R$ since $\cI(\pi(V))$ is the elimination ideal $\left(\cI(V)+(\pi(\vb x) - \vb y)\right) \cap \R[\vb y]$, where $\vb y = y_1,\dots,y_{k+1}$ are coordinates of $\CC^{k+1}$ (see \cite{cox_ideals_2015}).

    If $V$ has a smooth real point then $V_{\R}$ is Zariski dense in $V$ by \Cref{thm::simple_point}. Then $\pi(V_{\R})$ is Zariski dense in $\pi(V)$. Since $\pi$ is defined over $\R$ we have that $\pi(V_{\R})\subset (\pi(V))_{\R}$ and $(\pi(V))_{\R}$ is Zariski dense in $\pi(V)$.
    Then $\cl{(\pi(V))_{\R}} = \cl{\pi(V)}$ and by \Cref{thm::simple_point} $\cl{\pi(V)}$ has a smooth real point.
\end{proof}
\begin{proposition}
\label{prop::prj_not_smooth}
  Let $V\subset \CC^n$ be an irreducible variety defined over $\R$ of
  dimension $k$ without smooth real points. Then, for a generic
  projection $\pi \colon \CC^n \to \CC^{k+1}$  defined over $\R$, $\cl{\pi(V)}$ is defined over $\R$ and has no smooth real points.
\end{proposition}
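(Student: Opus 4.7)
The plan is to prove the contrapositive: if $\cl{\pi(V)}$ has a smooth real point then so does $V$, which contradicts the hypothesis. That $\cl{\pi(V)}$ is defined over $\R$ follows exactly as in \Cref{prop::proj_smooth_point} from the elimination-ideal description with real coefficients, so the real work is in showing that $W := \cl{\pi(V)}$ has no smooth real point.

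First I would invoke \Cref{lem::birational_proj} to obtain, for a generic real projection $\pi$, nonempty open subsets $U \subset V$ and $U' \subset W$, defined over $\R$, with $\pi|_{U} \colon U \to U'$ an isomorphism of $\R$-varieties. I would further shrink $U$ to its intersection with the smooth locus of $V$, which remains open and dense since $V$ is irreducible; so without loss of generality $U$ consists of smooth points of $V$ and $\pi(U)$ is open and dense in the irreducible variety $W$. The inverse $\phi := (\pi|_{U})^{-1}$ is then also defined over $\R$: by uniqueness of the inverse, complex conjugation fixes $\phi$ because it fixes both $\pi$ and $U$; equivalently, the geometric-resolution formulas underlying the proof of \Cref{lem::birational_proj} have real coefficients. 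In particular $\phi$ maps real points of $\pi(U)$ to real points of $U$.

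Now suppose for contradiction that $W$ has a smooth real point. By the Simple Point Criterion (\Cref{thm::simple_point}), $W_{\R}$ is Zariski dense in $W$. Since $\pi(U)$ is a nonempty Zariski open subset of the irreducible variety $W$ and $W_{\R}$ is dense, the intersection $W_{\R} \cap \pi(U)$ is nonempty; otherwise $W_{\R}$ would lie in the proper Zariski closed subset $W \setminus \pi(U)$, contradicting density. Picking any $\xi \in W_{\R} \cap \pi(U)$, the point $\phi(\xi) \in U$ is smooth (by our choice of $U$) and lies in $\R^{n}$ (since $\phi$ is defined over $\R$ and $\xi \in \R^{k+1}$). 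So $V$ has a smooth real zero, contradicting the hypothesis; hence $W$ has no smooth real point, as required.

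The main technical obstacle is the descent step ensuring that $\phi$ is defined over $\R$ and so preserves real points; once this is secured, the argument is a clean transfer of density from $W_{\R}$ back to $V_{\R}$ via the birational inverse, closed out by one application of the Simple Point Criterion. A secondary point worth checking is that the genericity conditions of \Cref{lem::birational_proj} can be met by a generic real projection: this holds because the bad locus is cut out by polynomials with real coefficients, so its complement, being a nonempty Zariski open subset of $\C^{n(k+1)}$, restricts to a nonempty Zariski open subset of the real space of projections.
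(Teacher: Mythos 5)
Your proof is correct and follows essentially the same route as the paper's: both pass to the contrapositive, invoke \Cref{lem::birational_proj} to identify a Zariski-dense open subset on which $\pi$ is an isomorphism, use the Simple Point Criterion to get density of the real locus of $\cl{\pi(V)}$, and then pull a smooth real point back to $V$ along the birational map. The only stylistic difference is in the final descent step: the paper argues that the unique preimage $p$ of a generic smooth real point satisfies $\pi(p)=\overline{\pi(p)}=\pi(\overline{p})$, forcing $p=\overline{p}$ by uniqueness of the fiber, whereas you package the same fact by observing that the birational inverse $\phi=(\pi|_U)^{-1}$ is itself defined over $\R$ and therefore carries real points to real points. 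You also make explicit (which the paper leaves implicit) the use of the Simple Point Criterion to ensure that some smooth real point lies in the generic locus where birationality holds, and the observation that the genericity conditions in \Cref{lem::birational_proj} can be satisfied by a real projection. These are genuine clarifications of small gaps in the paper's terse argument, not a different method.
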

\begin{proof}
  By \Cref{prop::proj_smooth_point}, $\cl{\pi(V)}$ is defined over $\R$.

  Assume now that $\cl{\pi(V)}$ has a smooth real point. Since $V$ is
  generically birational to $\pi(V)$ (\Cref{lem::birational_proj}),
  the preimage of a generic smooth point in $\pi(V)$ is a single point in $V$, which is smooth. If $\pi$ is defined over $\R$ then
  this smooth point $p\in V$ is real since
  $\pi(p)=\overline{\pi(p)}=\pi(\overline{p})$ implies that
  $p=\overline{p}$, showing that $V$ has a smooth real point.
\end{proof}
\begin{proposition}
\label{prop::proj_is_complex}
  Let $V\subset \CC^n$ be an irreducible variety not defined over $\R$ of dimension $k$. If $\pi \colon \CC^n \to \CC^{k+1}$ is a generic projection defined over $\R$ then $\cl{\pi(V)}$ is not defined over $\R$.
\end{proposition}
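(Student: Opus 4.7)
The plan is to reduce the conclusion to a dimension count on an incidence variety. Since $V$ is not defined over $\R$, the complex conjugate $\conj V$ is a distinct irreducible variety of the same dimension $k$; and since $\pi$ is defined over $\R$ it commutes with complex conjugation, so
\[
  \conj{\cl{\pi(V)}} = \cl{\pi(\conj V)}.
\]
Hence $\cl{\pi(V)}$ is defined over $\R$ if and only if $\cl{\pi(V)} = \cl{\pi(\conj V)}$, and it suffices to show that these two hypersurfaces are distinct for a generic real $\pi$.

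First, I will apply \Cref{lem::birational_proj} to both $V$ and $\conj V$: for a generic real $\pi$, both $\pi|_V \colon V \to H := \cl{\pi(V)}$ and $\pi|_{\conj V} \colon \conj V \to H' := \cl{\pi(\conj V)}$ are birational onto irreducible hypersurfaces in $\CC^{k+1}$. Next, I will bound the dimension of
\[
  \mathcal{I}_\pi := \{(p,q) \in V \times \conj V : \pi(p) = \pi(q)\}
\]
by a universal incidence argument in $V \times \conj V \times \hom_\CC(\CC^n, \CC^{k+1})$: away from the lower-dimensional locus $p = q$ (supported on $V \cap \conj V$, of dimension strictly less than $k$ since $V \ne \conj V$), the vector equation $\pi(p-q) = 0$ imposes $k+1$ independent linear conditions on $\pi$, so the total incidence has dimension $2k + n(k+1) - (k+1)$. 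Projecting to $\hom_\CC(\CC^n, \CC^{k+1})$ gives $\dim \mathcal{I}_\pi \le k-1$ for generic $\pi$, and the bound transfers to generic real $\pi$ by Zariski density of $\R^{n(k+1)}$ in $\CC^{n(k+1)}$.

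Finally, I will argue by contradiction. Assume $H = H'$. Since $\pi|_{\conj V}$ is birational onto $H'$ and $\pi(p) \in H = H'$ for every $p \in V$, the formula $\phi(p) := (\pi|_{\conj V})^{-1}(\pi(p))$ defines a rational map $\phi \colon V \dashrightarrow \conj V$ on the dense open subset of $V$ on which this inverse is defined. Its graph is an irreducible $k$-dimensional subvariety of $V \times \conj V$ contained in $\mathcal{I}_\pi$, since $\pi(\phi(p)) = \pi(p)$ by construction. This contradicts $\dim \mathcal{I}_\pi \le k-1$, so $H \ne H'$, and $\cl{\pi(V)}$ is not defined over $\R$.

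The main obstacle is the transversality underlying the incidence count, namely that for $(p, q) \in V \times \conj V$ with $p \ne q$ the $k+1$ scalar equations comprising $\pi(p-q) = 0$ cut out codimension $k+1$ in $\hom_\CC(\CC^n, \CC^{k+1})$—this follows from $p - q \ne 0$ by choosing coordinates adapted to $p - q$. Transferring from generic complex to generic real $\pi$ is then a routine application of Zariski density of the real parameter space.
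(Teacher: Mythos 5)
Your plan is correct, but it takes a genuinely different route from the paper. Both proofs share the same first reduction: since $\pi$ is defined over $\R$ it commutes with conjugation, so $\cl{\pi(V)}$ is defined over $\R$ iff $\cl{\pi(V)} = \conj{\cl{\pi(V)}} = \cl{\pi(\conj V)}$, and $V \ne \conj V$ because $V$ is not defined over $\R$. From there the paper fixes a \emph{single} point $p \in V$ with $\conj p \notin V$, cites a known fact that a generic projection sends $\conj p$ outside the hypersurface $\cl{\pi(V)}$, and concludes immediately from $\pi(p) \in \cl{\pi(V)}$ but $\conj{\pi(p)} = \pi(\conj p) \notin \cl{\pi(V)}$. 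You instead run a universal incidence count over all pairs in $V \times \conj V$ to get $\dim \mathcal{I}_\pi \le k-1$ for generic $\pi$, and then derive a contradiction from $\cl{\pi(V)} = \cl{\pi(\conj V)}$ via birationality of $\pi|_{\conj V}$ (\Cref{lem::birational_proj}), which would force a $k$-dimensional graph inside $\mathcal{I}_\pi$. Your argument is longer but self-contained, and in the same spirit: the point-separation fact the paper cites is itself a smaller version of your incidence count. A small simplification is available in the last step: once $\dim \mathcal{I}_\pi \le k-1$ is established you do not need the rational map $\phi$ at all --- if $\cl{\pi(V)} = \cl{\pi(\conj V)}$, then by Chevalley a generic $p \in V$ satisfies $\pi(p) \in \pi(\conj V)$, and since $p \notin V\cap\conj V$ for generic $p$ the witness $q$ differs from $p$, so the first projection $\mathcal{I}_\pi \setminus \{p=q\} \to V$ would be dominant and $\dim \mathcal{I}_\pi \ge k$ directly. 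Your transfer from generic complex to generic real $\pi$ via Zariski density of $\R^{n(k+1)}$ in $\C^{n(k+1)}$ is also fine.
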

\begin{proof}
  $V$ is not defined over $\R$ if and only if $V \neq \conj{V}$. Thus
  there exists $p \in V$ such that $\conj{p} \notin V$. Then for $\pi
  \colon \CC^n \to \CC^{k+1}$ a generic projection, we have
  $\pi(\conj{p})\notin \cl{\pi(V)}$ (see e.g. \cite[sec.~3]{blanco_computing_2004}).
  As $\pi$ is defined over $\R$, we have $\pi(p) \in \cl{\pi(V)}$ and
  $\conj{\pi(p)}=\pi(\conj{p})\notin \cl{\pi(V)}$. Therefore,
  $\cl{\pi(V)} \neq \conj{\cl{\pi(V)}}$ and $\cl{\pi(V)}$ is not defined over $\R$.
\end{proof}
\begin{theorem}
\label{thm::real_iff_real_proj}
  Let $V\subset \CC^n$ be an irreducible variety of dimension $k$. Then $V$ is defined over $\R$ and has a smooth real point if and only if, for $\pi \colon \CC^n \to \CC^{k+1}$ generic projection defined over $\R$, $\cl{\pi(V)}$ is defined over $\R$ and has a smooth real point.
\end{theorem}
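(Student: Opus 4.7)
The plan is to prove the two directions separately, reading the statement as a conjunction on each side. The forward direction is immediate from \Cref{prop::proj_smooth_point}, while the backward direction follows by contrapositive from the combination of \Cref{prop::prj_not_smooth} and \Cref{prop::proj_is_complex}. A single genericity condition on $\pi$, obtained as the intersection (i.e.\ conjunction) of the finitely many nonvanishing polynomial conditions coming from the three supporting propositions, suffices for the whole theorem.

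For the ``only if'' direction, I would assume $V$ is defined over $\R$ and has a smooth real point, then apply \Cref{prop::proj_smooth_point} directly: for a generic projection $\pi \colon \CC^n \to \CC^{k+1}$ defined over $\R$, the Zariski closure $\cl{\pi(V)}$ is defined over $\R$ and inherits a smooth real point.

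For the ``if'' direction I would argue by contrapositive: assume that $V$ is not defined over $\R$, or else that $V$ is defined over $\R$ but has no smooth real point, and show that the conclusion fails for a generic projection. In the first case, \Cref{prop::proj_is_complex} produces a generic projection for which $\cl{\pi(V)}$ is not defined over $\R$, so the joint property fails. In the second case, \Cref{prop::prj_not_smooth} produces a generic projection for which $\cl{\pi(V)}$ is defined over $\R$ but has no smooth real point, so again the joint property fails. Taking the genericity conditions together (the finite union of the exceptional polynomial conditions from the relevant propositions remains a proper closed subset of the space of projections), one obtains a single generic set of projections on which the contrapositive conclusion holds.

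The only delicate point is to ensure that the genericity conditions combine consistently: each of \Cref{prop::proj_smooth_point}, \Cref{prop::prj_not_smooth} and \Cref{prop::proj_is_complex} is stated for a generic projection defined over $\R$, meaning outside the zero set of finitely many polynomials in the coefficients of $\pi$. Since finite intersections of such generic loci remain generic, a generic $\pi$ satisfies all the needed properties simultaneously, and the two implications above together prove the theorem.
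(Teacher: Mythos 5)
Your proof is correct and follows essentially the same route as the paper: the forward direction via \Cref{prop::proj_smooth_point}, and the backward direction by case analysis (equivalently, contrapositive) via \Cref{prop::prj_not_smooth} and \Cref{prop::proj_is_complex}. The remark that finitely many generic conditions intersect to a generic condition is a helpful explicit addition, but it does not change the argument.
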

\begin{proof}
  If $V$ has a smooth real point then we apply \Cref{prop::proj_smooth_point} to conclude that $\cl{\pi(V)}$ has a smooth real point. If $V$ is defined over $\R$ but has no smooth real point, we apply \Cref{prop::prj_not_smooth} and deduce that $\cl{\pi(V)}$ has no smooth real points. Finally, if $V$ is not defined over $\R$ we apply \Cref{prop::proj_is_complex} to show that $\cl{\pi(V)}$ is not defined over $\R$.
\end{proof}
\begin{corollary}
\label{cor::proj_change_sign}
  Let $V\subset \CC^n$ be an irreducible variety of dimension $k$, and $\pi \colon \CC^n \to \CC^{k+1}$ a generic projection defined over $\R$. Then the following are equivalent:
  \begin{enumerate}
    \item $V$ is defined over $\RR$ and the real generators of $\cI(V)$ define a real radical ideal in $\RRg$;
    \item $\cI(\pi(V))$ is generated by a real polynomial, irreducible over $\CC$, which changes sign in $\RR^{k+1}$.
  \end{enumerate}
\end{corollary}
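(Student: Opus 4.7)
The plan is to combine \Cref{thm::real_iff_real_proj}---which transfers ``defined over $\R$ with a smooth real point'' between $V$ and $\cl{\pi(V)}$---with the two criteria for real radicality, applied to $V$ (via \Cref{thm::simple_point}) and to the hypersurface $\cl{\pi(V)}$ (via \Cref{thm::sign_changing}).

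First, by \Cref{lem::birational_proj}, $\cl{\pi(V)} \subset \CC^{k+1}$ is an irreducible variety of dimension $k$, hence an irreducible hypersurface. Thus $\cI(\pi(V)) = (f)$ for some $f \in \C[y_{1}, \ldots, y_{k+1}]$ that is irreducible over $\C$, and $\cl{\pi(V)}$ is defined over $\R$ precisely when $f$ can be rescaled so that its coefficients lie in $\R$. Since irreducibility over $\C$ implies irreducibility over $\R$, the principal ideal $(f) \subset \RRg_{k+1}$ is then prime, so both the Simple Point and Sign Changing Criteria apply to it.

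Next I would translate condition (1) into geometric terms. Under the assumption that $V$ is defined over $\R$, the ideal $\cI_{\R}(V) = \cI_{\C}(V) \cap \RRg$ is prime (contraction of a prime), so by \Cref{thm::simple_point} it is real radical iff $V$ has a smooth real point. Hence (1) is equivalent to ``$V$ is defined over $\R$ and has a smooth real point''. By \Cref{thm::real_iff_real_proj}, this in turn is equivalent to ``$\cl{\pi(V)}$ is defined over $\R$ and has a smooth real point''. Combined with the hypersurface description above, the latter says exactly that $f$ can be chosen in $\RRg_{k+1}$ and $(f)$ has a smooth real zero; by \Cref{thm::sign_changing} this is equivalent to $f$ changing sign on $\R^{k+1}$, which is condition (2).

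The main obstacle I anticipate is the small bookkeeping step verifying that ``$\cl{\pi(V)}$ defined over $\R$'' really permits a real choice of the principal generator: since $\conj{f}$ must generate the same principal ideal as $f$, one has $\conj{f} = \lambda f$ for some $\lambda \in \C^{*}$, and a rescaling by a square root of $\lambda$ produces a real representative. Once this point is settled, the rest is a direct chain of equivalences built on \Cref{thm::real_iff_real_proj}, \Cref{thm::simple_point} and \Cref{thm::sign_changing}.
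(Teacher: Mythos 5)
Your proof takes essentially the same route as the paper: translate condition (1) into ``$V$ is defined over $\R$ and has a smooth real point'' via \Cref{thm::simple_point}, transfer this to $\cl{\pi(V)}$ by \Cref{thm::real_iff_real_proj}, and then apply \Cref{thm::sign_changing} to the irreducible defining polynomial of the hypersurface. The bookkeeping step you flag (rescaling the generator by a square root of $\lambda$ where $\conj f = \lambda f$, noting $|\lambda| = 1$) is correct and is simply left implicit in the paper's terser argument.
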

\begin{proof}
  By \Cref{thm::simple_point}, real generators of $\cI(V)$ define a real radical ideal if and only if $V$ has a smooth real point . Then $(i) \iff (ii)$ follows from \Cref{thm::real_iff_real_proj} and \Cref{thm::sign_changing}.
\end{proof}

We finally describe the algorithm for testing real radicality.
{\begin{algorithm}[b]\caption{\label{algo:is real} Test real radicality}
{\strong{Input:} An irreducible variety $V \subset \CC^n$ of dim. $k$ and $\epsilon, r > 0$.
\begin{enumerate}
    \item Fix a generic projection $\pi \colon \C^n \to \C^{k+1}$;
    \item Compute the irreducible polynomial $h$ defining $\cl{\pi(V)}$;
    \item If $h$ is not real return false;
    \item Choose a generic point $\xi \in \R^{k+1}$ such that $h(\xi) \neq 0$;
    \item $s \assign \sign(h(\xi))$;
    \item Let $f = \norm{\vb x -\xi}^2$. Solve the MOP:
    \[
        f^*_{\mom,d} = \inf \{ \val{\sigma}{f} \mid \sigma \in \cL_{2d}(\pm (h+s\epsilon), r^2-f), \ \val{\sigma}{1} = 1 \};
    \]
    \item Extract a minimizer $\eta$ and check that $h(\xi)h(\eta)<0$.
\end{enumerate}
\strong{Output:} False if the MOP is not feasible, true if the MOP is feasible and $h(\xi)h(\eta)<0$.
}\end{algorithm}}

In step (i) we fix a generic real projection such that $V$ is birational to $\cl{\pi (V)}$ (\Cref{lem::birational_proj}).

In steps (ii) and (iii) we compute a minimal degree polynomial $h$ of
the hypersurface $\cl{\pi (V)}$, scaled so that one of its
coefficients is $1$ and stop if it has non real coefficients.

In steps (iv), (v) and (vi) we check if the real polynomial $h$
defines a real radical ideal, using \Cref{thm::sign_changing}. We find
$\xi \in \R^{k+1}$ where $h$ is not vanishing, and then search
another point where $h$ has opposite sign, by Moment Optimization.

If $h$ does not change sign then $\cV_{\R}(h+s\epsilon) = \emptyset$ and the MOP will not be feasible (see for instance \cite{lasserre_semidefinite_2008}).

On the other hand if $h$ changes sign there exist $\eta\in \R^{k+1}$ such that $h(\xi)h(\eta)<0$. If $\norm{\eta-\xi}<r$ and $0<\epsilon\le f(\eta)$ then the MOP has a solution. For generic $\xi$ the minimizer will be a unique smooth point, the MOP will be exact (since we added the ball constraint $r^2-f \ge 0$, the Archimedean property holds and generecally the MOM relaxation is exact), and we can certify that $h$ changes sign. The constraint $r^2-\norm{\vb x - \xi}^2 \ge 0$
is not necessary if $\cV_{\R}(h)$ is compact, since in this case the Archimedean hypothesis is already satisfied.

The correctness of \Cref{algo:is real} follows from \Cref{cor::proj_change_sign}.

\subsection{Test}
We test \Cref{algo:is real} for two simple cases, using the Julia packages \href{https://gitlab.inria.fr/AlgebraicGeometricModeling/MomentTools.jl}{\texttt{MomentTools.jl}} and \href{https://github.com/bmourrain/MultivariateSeries.jl}{\texttt{MultivariateSeries.jl}}.
\begin{example}
  We check that the irreducible polynomial $h=x^2+y^2 \in \R[x,y]$ defines an ideal $I = (h)$ that is not real radical. We randomly choose $\xi = (-1.5667884102749219, -0.5028780359864093)$, where $h(\xi)>0$. We check that $h$ does not change sign, detecting the infeasibility of the optimization problem.
  \begin{verbatim}
    X = @polyvar x y
    h = x^2 + y^2
    s = sign(h(X => xi))
    dist = sum((xi - vec(X)).^2)
    e = 0.01
    v, M = minimize(dist, [h+s*e], [9 - dist],
                                    X, 4, optimizer);
  \end{verbatim}
\vspace{-3mm}
  The termination status \texttt{termination\_status(M.model)}:
  \begin{verbatim}
INFEASIBLE::TerminationStatusCode = 2
  \end{verbatim}
\vspace{-3mm}
  of the optimization shows the infeasibility of the moment optimization program and
                                    that $I$ is not real radical.
\end{example}
  In the same way we detect the sign change. For $h=x^2 +
  y^2 - 1$ and $\xi$ as above, we find $\eta = (-0.9473807839956285,$ $-0.30408822493309284)$ and $h(\xi)h(\eta)<0$.

  In the previous examples we could avoid the ball constraint $r^2-\norm{\vb x - \xi}^2 \ge 0$, since in these cases $\cV_{\R}(h)$ is compact and the Archimedean condition is already satisfied.

\section{Computing the real radical}\label{sec:computing}

With the main ingredients, we can now describe the algorithm
for computing the real radical of an ideal $I=(\vb f)$, presented as the intersection of real prime ideals. The steps, summarised in Algorithm \ref{algo:real radical}, are detailed hereafter.

\begin{algorithm}[ht]\caption{\label{algo:real radical} Real radical}
\strong{Input:} Polynomials $\vb f=(f_{1}, \ldots,f_{s}) \subset \RRg$.\\

$d :=\max (\deg(\vb f_i), {i=1, \ldots,s})-1$; success := false; \\
Repeat until success
\begin{enumerate}
   \item $d:= d+1$
   \item Compute a generic element $\sigma^{*}$ of $\cL_{2d+2}(\pm \vb f)$
   \item Compute a graded basis $\vb k$ of $\ann_{d}(\sigma^{*})$ (\Cref{algo:orthogonal})
   \item Compute the numerical irreducible components $V_{i}$ of
     $V_{\CC}(\vb k)$ (described by witness sets)
   \item For each component $V_{i}$, check that $V_{i}$ is real (\Cref{algo:is real}). If not repeat from step (i).
   \item success := true
   \item For each component $V_{i}$ compute defining equations $\vb h_{i}=\{h_{i,1}, \ldots, h_{i,n+1}\}$ of $V_{i}$
\end{enumerate}
\strong{Output:} The polynomials $\vb h_i$ generating the minimal real prime ideals $\p_i$ lying over $(\vb f)$.
\end{algorithm}

In step (ii) we compute a generic element of $\cL_{2d+2}(\pm \vb f)$ solving a MOP with a constant objective function.

In step (iii) we use \Cref{algo:orthogonal} to compute the graded basis $\vb k$.

In step (iv) we find the irreducible components of the
variety $\cV_{\CC}(\vb k)$, described by witness sets (see e.g. \cite{BatesNumericallySolvingPolynomial2013}). The embedded components of $(\vb k)$ are not recovered by this technique.

In step (v) we control if the irreducible components of $\cV_{\C}(\vb k)$ are real, using \Cref{algo:is real}.

In step (vii), the equations defining $V_{i}$ are obtained from $n+1$
generic projections. In particular, the equation of a generic
projection of $V_{i}$ used in step (ii) of \Cref{algo:is real}
provides one of the defining equation, say $h_{i,1}$.

We prove the correctness of the algorithm. By \Cref{thm:realradical}
we have $\cV_{\R}(\vb k) = \cV_{\R}(\vb f)$ for $d\ge
\max(\deg(\vb f))$. Let $\p_i = (\vb h_i)$ in step (vii). By construction $\cV_{\R}(\vb k)=\bigcup_i (V_i)_{\R} = \bigcup_i \cV_{\R}(\p_i) = \cV_{\R}(\bigcap_i \p_i)$.
If step (v) succeeds, all the $\p_i$'s are real radical, and thus $\bigcap_i \p_i$ is real radical. Since $\cV_{\R}(\vb f) =\cV_{\R}(\bigcap_i \p_i)$, by the Real Nullstellensatz $\bigcap_i \p_i = \sqrt[\R]{\vb f}$ and the $\p_i$ are the real prime ideal lying over $(\vb f)$.
The loop stops for some $d\gg 0$ by \Cref{thm:realradical}.

\Cref{algo:real radical} computes the minimal real prime ideals lying over
$(\vb f)$, but does not check that the equations $\vb k$ define a real
radical ideal. If the ideal $(\vb k)$ has no embedded component and
the prime ideals $\p_{i}$ are of multiplicity 1 (checked with the
Jacobian criterion for $\vb h$ at a witness point of $\p_{i}$), then
the success of step (v) implies that $\vb k = \ann_d(\sigma^*)$
defines the real radical of $(\vb f)$.

\Cref{algo:real radical} can be simplified in the case where
$\cV_{\R}(\vb f)$ is finite. We can check that $(\vb k) =
\sqrt[\R]{\vb f}$, for $\vb k = \ann_d(\sigma^*)$, using the flat
extension criterion. We can also detect this condition with the
initial of $\vb k$, see \Cref{rem:flatext}. In this case, $\sigma^{*}$
extends to a positive linear functional on $\RRg$ and $(\vb k)= \sqrt[\R]{\vb f}$.

Similarly, when the ideal $(\vb k)$ is prime, one only needs to check
that it is real (using \Cref{algo:is real} on a generic
projection), steps (iv), (vii) can be skipped and we obtain $(\vb k) = \sqrt[\R]{\vb f}$.
When $(\vb k)$ is real radical, the algorithm can even output directly $(\vb k) = \sqrt[\R]{\vb f}$.


\section{Examples}\label{sec:example}

We illustrate Algorithm \ref{algo:real radical}, with the Julia
package \texttt{MomentTools.jl}\footnote{\url{https://gitlab.inria.fr/AlgebraicGeometricModeling/MomentTools.jl}},
using the Semi-Definite Programm optimizer \texttt{Mosek}.
\subsection{The isolated singular locus of a real surface}
\begin{example}
  Let $f = -10z^4 + x^3 - 3x^2z + 3xz^2 + 20yz^2 - z^3 - 10x^2 + 20xz - 10y^2 - 10z^2
, g = 5 - (x^2+y^2+z^2)$ and $S=\{\,\xi \in \R^3 \mid f(\xi)=0, \ g(\xi) \ge 0\,\}$. We want to compute the $S$-radical of $I=(f)$, which is equal to $(z-x, x^2-y)$.
  \begin{verbatim}
  X = @polyvar x y z
  f = -10*z^4 + x^3 - 3*x^2*z + 3*x*z^2 + 20*y*z^2
           - z^3 - 10*x^2 + 20*x*z - 10*y^2 - 10*z^2
  g = 5 - (x^2+y^2+z^2)
  v, M = minimize(one(f),[f], [g], X, 6, optimizer)
  sigma = get_series(M)[1]
  L = monomials(X,0:3)
  K,In,P,B = annihilator(sigma, L)
  \end{verbatim}
\vspace{-3mm}
We compute a generic positive linear functional $\sigma$ (by optimising the constant function $1$ on $S$), a graded basis \texttt{K} of $(\ann_d(\sigma))$,
 the initial monomials \texttt{In} of \texttt{K},
 a basis \texttt{P} of $\frac{\RRg}{(\ann_d(\sigma))} $ orthogonal with respect to $\langle \cdummy, \cdummy
 \rangle_{\sigma}$ and
 a monomial basis \texttt{B} of $\frac{\RRg}{(\ann_d(\sigma))}$.
The elements of \texttt{K} are:
{\small
  \begin{verbatim}
    z - 0.999999935776211x - 2.027089868945844e-9y
                              + 1.9280308682132505e-9
    x² - 1.9114608711668615e-8x - 0.9999998601127081y
                              - 2.6012502193917264e-7
\end{verbatim}
}%
\noindent{}These polynomials define a parametrisation of parabola
and thus generate a real radical ideal. They are approximation of the
generators of the $S$-radical of $I$ within an error \texttt{3.e-7}.

We can obtain the generators also using a slack variable $s$,
and replacying the inequality $g \ge 0$ by the equation $g - s^2 = 0$.
In this case the elements of \texttt{K} are:
{\small
  \begin{verbatim}
    z - 0.9999999987418964x - 2.0081938216111927e-9y
                                  + 1.848080975279204e-9
    x² + 5.417748642831503e-10x - 0.9999999813624691y
         - 4.507056024417168e-23s - 2.369265117430075e-8
    s²  + 2.531532655747432e-22ys - 7.729278487211091e-23xs
          - 2.0732509876020901e-22s + 0.9999999794170498y²
          + 1.1737503831818984e-8xy + 2.0000000080371674y
          - 1.4039307522382754e-8x - 4.999999978855321
\end{verbatim}
}%
\noindent{}and the generators of the $S$-radical are approximately $\texttt{K}\cap \R[x, y, z]$.
\end{example}

\begin{example}
  We compute equations for the hold of the Whitney umbrella. Let $f =
  x^2 - y^2z, g = 1 - (x^2+y^2+(z+2)^2)$ and $S=\{\,\xi \in \R^3 \mid
  f(\xi)=0, \ g(\xi \ge 0)\,\}$. We compute the $S$-radical of
  $I=(f)$, which is equal to $(x,y)$. Proceding as above, we obtain
  for \texttt{K}, the polynomials:
  {\small
  \begin{verbatim}
  x + 3.1388489268444904e-21,  y + 3.6567022687420305e-21
  \end{verbatim}
}%
\vspace{-4mm}\noindent{}These polynomials are a good approximation of the generators $(x,y)$ of
  the real radical, defining the singular locus of the Whitney umbrella.
\end{example}
\subsection{Components of different dimensions}
\begin{example}
  This example is taken from \cite[ex.~9.6]{rostalski_algebraic_2009}. We want to compute the real radical of $I = (f_1, f_2, f_3) \subset \R[x, y, z]$, where:
  \begin{align*}
  f_1 &= x^2 + x\,y - x\,z - x - y + z \\
  f_2 &= x\,y + 2\,y^2 - y\,z - x - 2\,y + z \\
  f_3 &= x\,z + y\,z - z^2 - x - y + z.
  \end{align*}
  Its variety has three irreducible components, two lines and a point, defined by the real prime ideals $\p_1=(x-z, y)$, $\p_2=(x-z+1, y-1)$ and $\m=(x-1, y-1, z-1)$. In the primary decomposition of $I$ there is an embedded component $\m'$, corresponding to the point $(1, 0, 1)\in \cV(\p_1)$ which has multiplicity two. The real radical of $I$ is $\sqrt[\R]{I} = \p_1 \cap \p_2 \cap \m = (y^2 - y, x^2 - 2xz + z^2 + x - z, xz + yz - z^2 - x - y + z, xy + xz - z^2 - 2x - y + 2z)$.

  We compute $\sqrt[\R]{I}$ as described in the algorithm.
{\small
  \begin{verbatim}
    v, M = minimize(one(f1),[f1,f2,f3], [], X, 8, optimizer)
    sigma = get_series(M)[1]
    L = monomials(X,0:3)
    K,I,P,B = annihilator(sigma, L)
\end{verbatim}
}%
\noindent{}The elements of \texttt{K} are:
        {\small
\begin{verbatim}
   xz - 0.9999999985579915x² - 0.9999999940764733xy
        + 0.9999999838152133x + 0.9999999868597321y
        - 0.9999999838041349z - 2.550976860304921e-10
   y² + 4.386341684978274e-7x² + 3.2135911001749273e-7xy
        - 8.511512801700947e-7x - 1.0000008530709377y
        + 9.888494964176088e-7z - 5.851033908621897e-8
   yz + 8.763853490689755e-7x² - 0.9999993625797754xy
        + 0.9999983122334805x - 1.6948939787209127e-6y
        - 0.9999980367703514z - 1.1680315895740145e-7
   z² - 0.9999991215344914x² - 1.99999935020258xy
        + 2.99999828318184x + 1.9999982828997438y
        - 2.999998007995895z - 1.1724998920381591e-7
\end{verbatim}
}
which are approximately (within an error of \texttt{1.e-6}) generators of $\sqrt[\R]{I}$.
\end{example}
\begin{example}
  This example is taken from \cite[8.2]{brake_validating_2016}. We want to compute the real radical of $I = (f_1, f_2, f_3) \subset \R[x, y, z]$, where:
  \begin{equation*}
  f_1 = xyz, \quad  f_2 = z(x^2+y^2+z^2+y), \quad f_3 = y(y+z).
  \end{equation*}
  The associated complex variety has four irreducible components: two
  conjugates lines intersecting in the origin, another line (double
  for $\vb f$) and a point. The real variety is given by a line $\p =
  (y, z)$ and a point $\m = (x, 2y+1, 2x-1)$. The real radical is
  $\sqrt[\R]{I}= \p \cap \m = (yx, z + y, y^2 + \frac{y}{2})$. A
  direct check shows that these polynomials generate a real radical ideal.

  We compute $\sqrt[\R]{I}$ as described above and obtain for \texttt{K}:
{\small
\begin{verbatim}
   z - 6.53338688785662e-19x + 0.9995827809845268y
                             - 0.00020850768649473272
   xy - 1.4685109255649737e-19x² + 5.9730164512226755e-6x
        + 2.1320912413237275e-19y + 1.0655056374451632e-19
   y² - 2.268705086623265e-6x² + 1.88498770272315e-19x
        + 0.4998194337295852y + 4.384653173789382e-6
\end{verbatim}
}
\noindent{}approximating (within an error of \texttt{5.e-4}) the generators of $\sqrt[\RR]{I}$.
\end{example}

\subsection{Limitations}
\Cref{algo:real radical} is a symbolic-numeric algorithm, which output
depends on the quality of the numerical tools that are involved. In
particular, the numerical quality of the generic positive linear functional
$\sigma^{*}$, produced by a SDP solver, impacts the computation of
generators of the real radical. This computation depends on a threshold used to determine
when a polynomial is in the annihilator. A detailled analysis of the
numerics behind the algorithm as well as an analysis of its complexity are left for futur investigations.
\begin{acks}
The authors would like to thank the anonymous referees
for their helpful suggestions. This work is
supported by the \grantsponsor{}{European Union's Horizon 2020 research and innovation programme}
{https://ec.europa.eu/programmes/horizon2020/en}
under the Marie Sk\l{}odowska-Curie Actions, grant agreement \grantnum{}{813211} (\grantnum{}{POEMA}).
\end{acks}
\bibliographystyle{ACM-Reference-Format}
\bibliography{POEMA.bib,Books.bib,algebraic.bib,numeric.bib,RealRadical.bib}

\end{document}